\newtheorem{theorem}{Theorem}[section]
\newtheorem{prop}[theorem]{Proposition}
\newtheorem{lem}[theorem]{Lemma}
\newtheorem{corol}[theorem]{Corollary}
\theoremstyle{definition}
\newtheorem{defi}[theorem]{Definition}
\newtheorem{rmq}[theorem]{Remark}
\def\Gr{{\rm{Gr}}}
\def\dim{{\rm{dim}\,}}
\def\ddim{{\textbf{dim}\,}}
\def\modd{\rm{mod-}}
\def\<{\left<}
\def\>{\right>}
\def\ens#1{\left\{ #1 \right\}}
\def\fl{{\longrightarrow}\,}
\def\CC{{\mathcal{C}}}
\def\A{{\mathbb{A}}}
\def\C{{\mathbb{C}}}
\def\D{{\mathbb{D}}}
\def\E{{\mathbb{E}}}
\def\P{{\mathbb{P}}}
\def\Q{{\mathbb{Q}}}
\def\Z{{\mathbb{Z}}}
\def\Aaffine{\widetilde{\A}}
\def\Daffine{\widetilde{\D}}
\def\Eaffine{\widetilde{\E}}
\def\ens#1{\left\{ #1 \right\}}
\def\Ext{{\rm{Ext}}}
\def\End{{\rm{End}}}
\def\Hom{{\rm{Hom}}}
\def\Ob{{\rm{Ob}}}
\def\d{{\partial}}
\def\ker{{\rm{Ker}}\,}
\def\coker{{\rm{Coker}}\,}
\def\Aaffine{\tilde{\mathbb A}}
\def\Ai{\mathbb A_{\infty}}
\def\ev_1{{\rm{ev}}}
\def\k{{\bf{k}}}
\def\kQ{{\bf{k}}Q}
\title[Euclidean friezes]{Friezes and a construction of the euclidean cluster variables}
\author{I. Assem and G. Dupont}
\date{\today}
\begin{document}

\begin{abstract}
	Let $Q$ be an euclidean quiver. Using friezes in the sense of Assem-Reutenauer-Smith, we provide an algorithm for computing the (canonical) cluster character associated to any object in the cluster category of $Q$. In particular, this algorithm allows to compute all the cluster variables in the cluster algebra associated to $Q$. It also allows to compute the sum of the Euler characteristics of the quiver grassmannians of any module $M$ over the path algebra of $Q$.
\end{abstract}

\maketitle

\setcounter{tocdepth}{1}
\tableofcontents

\section{Introduction}
	\subsection{Cluster algebras}
		In \cite{cluster1}, Fomin and Zelevinsky introduced a class of commutative algebras, called \emph{cluster algebras}, in order to design a combinatorial framework for studying total positivity in algebraic groups and canonical bases in quantum groups. Since then, cluster algebras have shown connections with various areas of mathematics like Lie theory, Teichm\"uller theory, Poisson geometry or the representation theory of algebras.

		Let $Q=(Q_0,Q_1)$ be an acyclic quiver, that is an oriented graph which contains no oriented cycle. Let $Q_0$ denote the set of points in $Q$ and $Q_1$ the set of arrows in $Q$. We assume that these sets are finite and we fix a $Q_0$-tuple $\textbf u=(u_i|i \in Q_0)$ of indeterminates. The cluster algebra $\mathcal A(Q)$ with initial seed $(Q,\textbf u)$ is a subalgebra of $\Z[\textbf u^{\pm 1}]=\Z[u_i^{\pm 1}|i \in Q_0]$, generated by a distinguished set of generators, called \emph{cluster variables} defined recursively using a combinatorial process called \emph{mutation} \cite{cluster1}. 

		The number of cluster variables in a cluster algebra can be finite or infinite. If the cluster algebra has finitely many cluster variables, it is of \emph{finite type}. This is the case if and only if $Q$ is a Dynkin quiver, that is a quiver of type $\A,\D$ or $\E$ \cite{cluster2}. In this case, it is possible to compute algorithmically the cluster variables in $\mathcal A(Q)$ by applying mutations recursively. If $Q$ is not a Dynkin quiver, this recursive process does not end. Nevertheless, cluster variables can be parametrised using the representation theory of $Q$. In this paper, we exhibit an algorithm for constructing cluster variables with respect to this parametrisation when $Q$ is a quiver of euclidean type, that is an acyclic orientation of an \emph{euclidean} diagram of type $\Aaffine,\Daffine$ or $\Eaffine$ (these diagrams are also called \emph{extended Dynkin} or \emph{affine} in the literature).

		More precisely, if $\CC_Q$ denotes the \emph{cluster category} introduced in \cite{BMRRT}, Caldero, Chapoton and Keller introduced an explicit map 
		$$X_?: \Ob(\CC_Q) \fl \Z[\textbf u^{\pm 1}]$$
		called (canonical) \emph{cluster character} (also called \emph{Caldero-Chapoton map} in the literature) such that 
		$$\ens{\textrm{cluster variables in }\mathcal A(Q)} = \ens{ X_M | M \textrm{ indecomposable and rigid in } \CC_Q}.$$
		where an object is called \emph{rigid} if it has no self-extension.

		In general, in order to compute the character $X_M$ of an object $M$, one has to compute the Euler characteristics of certain projective varieties called \emph{quiver grassmannians} (see Section \ref{section:Xfrieze} for definitions). These Euler characteristics are hard to compute in practice. In some very simple cases, explicit computations were made \cite{CZ,Cerulli:A21,Cerulli:string,Poettering:string} but in general there exists no simple way for computing these characteristics and thus for computing $X_M$.

	\subsection{Friezes}
		In this article, we provide a simple combinatorial method for computing these characters when $Q$ is an euclidean quiver. Our methods are based on the concept of \emph{frieze} on a stable translation quiver. This concept appeared in various independent works under different names~: \emph{frises} in \cite{ARS:frises}, \emph{frieze patterns} in \cite{Propp:frieze,CoxeterConway1,CoxeterConway2}, \emph{cluster-mesh algebras} in \cite{Dupont:stabletubes}. 

		\subsubsection{Friezes on translation quivers}
			By definition, a \emph{stable translation quiver} $(\Gamma, \tau)$ is a quiver $\Gamma=(\Gamma_0,\Gamma_1)$ without loops together with a bijection $\tau:\Gamma_0 \fl \Gamma_0$, called \emph{translation} such that there is a bijection from the set of arrows $j \fl i$ in $\Gamma_1$ to the set of arrows $\tau i \fl j$ in $\Gamma_1$ for any two points $i,j \in \Gamma_0$. We always assume that a translation quiver is locally finite, that is, for any $i \in Q_0$, there exist only finitely many arrows starting or ending at $i$ in $\Gamma_1$. Note that $\Gamma_0$ and $\Gamma_1$ may be infinite sets.

			Throughout the paper, commutative rings are supposed unitary and, given a commutative ring $R$, we denote by $1$ its identity.\begin{defi}\label{defi:frieze}
				Let $(\Gamma,\tau)$ be a stable translation quiver and $R$ be a commutative ring. An \emph{$R$-frieze} on $(\Gamma,\tau)$ is a map $f: \Gamma_0 \fl R$ such that, for any $i,j \in \Gamma_0$, 
				$$f(i)f(\tau(i))=\prod_{\substack{\alpha \in \Gamma_1\\\alpha : j \fl i}}f(j)+1.$$

				By a \emph{frieze} on $(\Gamma,\tau)$, we mean an $R$-frieze for a certain commutative ring $R$.
			\end{defi} 
			It is clear from the definition that an $R$-frieze on a stable translation quiver $(\Gamma,\tau)$ is entirely determined by its restriction on each of the connected components of $(\Gamma,\tau)$.

		\subsubsection{Friezes on Auslander-Reiten quivers}
			If $\CC$ is a Krull-Schmidt category with an Auslander-Reiten translation, we denote by $\Gamma(\CC)$ its Auslander-Reiten quiver. It is naturally a translation quiver for the translation induced by the Auslander-Reiten translation in $\CC$.

			If $f$ is a frieze on $\Gamma(\CC)$, we slightly abuse notations and view $f$ as a function on objects in $\CC$. For any object $M$ in $\CC$, we define $f(M)$ as follows. If $M$ is indecomposable, $f(M)$ is the value of $f$ on the point in $\Gamma(\CC)$ corresponding to the isomorphism class of $M$. If $M_1, \ldots, M_n$ are indecomposable, we define $f(M_1 \oplus \cdots \oplus M_n)$ as the product $f(M_1)\cdots f(M_n)$ and by convention, we set $f(0)=1$.
	
	\subsection{Auslander-Reiten quivers of cluster categories}
		We fix an algebraically closed field $\k$ and an acyclic quiver $Q=(Q_0,Q_1)$ such that $Q_0$ and $Q_1$ are finite. We denote by $\kQ$ the path algebra of $Q$ and by mod-$\kQ$ the category of finitely generated right-$\kQ$-modules. For any $i \in Q_0$, we denote by $S_i$ the simple module associated to $i$, by $P_i$ its projective cover and by $I_i$ its injective hull. We denote by $\tau$ the Auslander-Reiten translation on mod-$\kQ$. Note that a $\kQ$-module $M$ will be identified with a representation $(M(i),M(\alpha))_{i \in Q_0,\alpha \in Q_1}$ of $Q$.

		Let $D^b(\modd \kQ)$ be the bounded derived category. We also denote by $\tau$ the Auslander-Reiten translation on $D^b(\modd \kQ)$ and we denote by $[1]$ the suspension functor in $D^b(\modd \kQ)$. Following \cite{BMRRT}, the \emph{cluster category} of $Q$ is the orbit category $\CC_Q$ of the functor $\tau^{-1}[1]$ in $D^b(\modd \kQ)$. It is a triangulated category satisfying the 2-Calabi-Yau property, that is, there is a bifunctorial duality
		$$\Ext^1_{\CC_Q}(X,Y) \simeq D \Ext^1_{\CC_Q}(Y,X)$$
		for any $X,Y \in \Ob(\CC_Q)$ where $D=\Hom_{\k}(-,\k)$ \cite{K,BMRRT}.

		The set of indecomposable objects in $\CC_Q$ can be identified with the disjoint union of the set of indecomposable $\kQ$-modules and the set of shifts of the indecomposable projective $\kQ$-modules \cite{BMRRT}. 

		Under this identification, the Auslander-Reiten quiver $\Gamma(\CC_Q)$ of the cluster category $\CC_Q$ is a stable translation quiver for the translation induced by the shift $[1]$ on $\CC_Q$ which coincides with the Auslander-Reiten translation $\tau$ of induced by the Auslander-Reiten translation on $D^b(\modd \kQ)$. 

		All the projective $\kQ$-modules belong to the same connected component $\mathcal P$ of $\Gamma(\CC_Q)$. This component, called the \emph{transjective component}, is isomorphic to $\Z Q$. If $Q$ is a Dynkin quiver, $\mathcal P$ is the unique connected component in $\Gamma(\CC_Q)$. Otherwise, $\Gamma(\CC_Q)$ contains infinitely many other connected components, called \emph{regular}. An indecomposable object in the transjective component is called \emph{transjective}, it is of the form $P_i[n+1]$ for some $n \in \Z$ and $i \in Q_0$. If $n < 0$, then $P_i[n+1]$ is identified with the $\kQ$-module $\tau^{-n}P_i$ and is called \emph{postprojective}, if $n > 0$, then $P_i[n+1]$ is identified with the $\kQ$-module $\tau^{n-1}I_i$ and is called \emph{preinjective}. An indecomposable object in a regular component is identified with a regular $\kQ$-module and is called \emph{regular}. A decomposable object is \emph{transjective} (or \emph{regular}) if all its direct summands are transjective (or regular, respectively).

		If $Q$ is euclidean, regular components are parametrised by $\P^1 (\k)$ and for any $\lambda \in \P^1 (\k)$ the corresponding component $\mathcal T_\lambda$ is a \emph{tube} of rank $p_\lambda \geq 1$. If $p_\lambda=1$, $\mathcal T_\lambda$ is called \emph{homogeneous}, otherwise it is called \emph{exceptional}. We set 
		$$\P^{\mathcal E}=\ens{\lambda \in \P^1 (\k) | p_\lambda > 1}$$
		the index set of exceptional tubes and 
		$$\P^{\mathcal H}=\ens{\lambda \in \P^1 (\k) | p_\lambda = 1}$$
		the index set of homogeneous tubes. There exists at most three exceptional tubes in $\Gamma(\CC_Q)$, in other words $|\P^{\mathcal E}| \leq 3$. A module at the mouth of a tube is called \emph{quasi-simple}. 

		If $Q$ is euclidean, the Tits form of $Q$ on $\Z^{Q_0}$ has corank one and there exists a unique positive generator of its radical which is denoted by $\delta$. The vector $\delta$ is called the \emph{radical vector} or \emph{positive minimal imaginary root} of $Q$ and it is known that, for any orientation of $Q$, there exists a point $e \in Q_0$ which is either a sink or a source and such that $\delta_e=1$. Throughout the paper $e$ denotes such a point.

	\subsection{Main results}
		Let $Q$ be an euclidean quiver. For any $\lambda \in \P^1(\k)$, there exists a unique quasi-simple module $M_\lambda$ in $\mathcal T_\lambda$ such that $\dim M_\lambda(e) =1$ and we set  
		$$N_\lambda = \left\{\begin{array}{ll}
			M_\lambda[1] & \textrm{ if $e$ is a source}~;\\
			M_\lambda[-1] & \textrm{ if $e$ is a sink}.
		\end{array}\right.$$
		We prove in Lemmas \ref{lem:extension1inj} and \ref{lem:extension1proj} the existence of transjective objects $B_\lambda$ and $B_\lambda'$ uniquely determined by the fact that there exist non-split triangles
		$$N_\lambda \fl B_\lambda \fl S_e \fl N_\lambda[1] \textrm{ and } S_e \fl B_\lambda' \fl N_\lambda \fl S_e[1]$$
		if $e$ is a source and non-split triangles
		$$N_\lambda \fl B_\lambda' \fl S_e \fl N_\lambda[1] \textrm{ and } S_e \fl B_\lambda \fl N_\lambda \fl S_e[1]$$
		if $e$ is a sink.

		\begin{defi}
			Let $Q$ be an euclidean quiver. A \emph{cluster frieze} on $\Gamma(\CC_{Q})$ is a frieze $f$ on $\Gamma(\CC_{Q})$ with values in a commutative ring $R$ such that for any $\lambda \in \P^{\mathcal E}$
			\begin{equation}\label{eq:clusterfrieze}
				f(N_{\lambda}[k])=\frac{f(B_{\lambda}[k])+f(B'_{\lambda}[k])}{f(S_e[k])} \textrm{ for any } k \in \ens{0,\ldots, p_\lambda-1}.
			\end{equation}

			A cluster frieze is called a \emph{strong cluster frieze} if (\ref{eq:clusterfrieze}) holds for any $\lambda \in \P^1(\k)$. 
		\end{defi}

		Cluster friezes allow to realise cluster variables in the following sense~:
		\begin{theorem}\label{theorem:main}
			Let $\CC_{Q}$ be the cluster category of an euclidean quiver $Q$ and let $f$ be a cluster frieze on $\Gamma(\CC_{Q})$ with values in $\Q(u_i,i \in Q_0)$ such that $f(P_i[1])=u_i$ for any $i \in Q_0$. Then $f(M)=X_M$ for any indecomposable rigid object $M$ in $\CC_Q$.

			In particular, $f$ induces a bijection between the set of indecomposable rigid objects in $\CC_Q$ and the set of cluster variables in $\mathcal A(Q,\textbf u)$.
		\end{theorem}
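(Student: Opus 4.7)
The plan is to prove Theorem \ref{theorem:main} in two steps: first, to show that the Caldero-Chapoton map $X_?$ itself is a cluster frieze on $\Gamma(\CC_Q)$; second, to show that any cluster frieze on $\Gamma(\CC_Q)$ is uniquely determined on indecomposable rigid objects by its values on the slice $\ens{P_i[1] : i \in Q_0}$. Since $X_{P_i[1]} = u_i = f(P_i[1])$ by hypothesis, the two steps together force $f(M) = X_M$ on every indecomposable rigid $M$. The bijection statement then follows from the standard fact (Caldero-Keller, and subsequent work in the acyclic case) that $M \mapsto X_M$ induces a bijection from indecomposable rigid objects of $\CC_Q$ to cluster variables of $\mathcal A(Q,\textbf u)$.

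For the first step I would invoke the cluster multiplication formula of Caldero-Keller / Palu: for indecomposables $L,M$ in $\CC_Q$ with $\dim \Ext^1_{\CC_Q}(L,M) = 1$ and non-split triangles $M \fl B \fl L$ and $L \fl B' \fl M$, one has $X_L X_M = X_B + X_{B'}$. Applied to an AR triangle $\tau M \fl E \fl M$, the $2$-Calabi-Yau property and indecomposability give $\dim \Ext^1_{\CC_Q}(M,\tau M) = \dim \End_{\CC_Q}(M) = 1$; the AR triangle is one of the two non-split triangles, and thanks to the identification $\tau = [1]$ in $\CC_Q$, the connecting morphism $\tau M \fl M[1] = \tau M$ of the other triangle is a nonzero scalar multiple of the identity, hence an isomorphism, whose cone has zero middle term. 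Since $X_0 = 1$, this yields the frieze relation $X_M X_{\tau M} = X_E + 1$ at every vertex. Applied instead to the pair $(N_\lambda[k], S_e[k])$, with Lemmas \ref{lem:extension1inj} and \ref{lem:extension1proj} identifying $B_\lambda[k]$ and $B'_\lambda[k]$ as the two middle terms of a $1$-dimensional Ext, the same formula yields the cluster frieze axiom (\ref{eq:clusterfrieze}). Hence $X_?$ is a cluster frieze.

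For the uniqueness step on the transjective component $\mathcal P \simeq \Z Q$, the $\ens{P_i[1]}$ form a complete slice. A standard propagation argument shows that a frieze on $\Z Q$ is determined by its values on any slice: rearranging the frieze relation to $f(\tau Y) = (\prod_j f(Y_j) + 1)/f(Y)$ lets one compute $f(\tau Y)$ (and symmetrically $f(\tau^{-1} Y)$) from $f(Y)$ and its mesh neighbours, and induction on distance fills in the entire component. Denominators never vanish, since the $X_?$-values on transjective objects are nonzero Laurent polynomials. On an exceptional tube $\mathcal T_\lambda$ with $\lambda \in \P^{\mathcal E}$, the cluster frieze axiom then forces $f(N_\lambda[k]) = X_{N_\lambda[k]}$ for $k = 0,\ldots,p_\lambda-1$, since all objects in the right-hand side of (\ref{eq:clusterfrieze}) are transjective and have already been matched. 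The family $\ens{N_\lambda[k] : k = 0,\ldots,p_\lambda-1}$ is a full $\tau$-orbit of quasi-simples of $\mathcal T_\lambda$, hence all of them; iterating the mesh relation upward in the tube then determines $f$ on every rigid module of $\mathcal T_\lambda$ (those of quasi-length at most $p_\lambda - 1$), and the same conclusion holds for $X_?$.

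The main obstacle is the argument in the first step that converts the two-term multiplication formula $X_L X_M = X_B + X_{B'}$ into the one-term frieze relation; this crucially exploits both the identification $\tau = [1]$ on $\CC_Q$ and its $2$-Calabi-Yau structure, and it is what makes cluster characters behave like friezes at all. A secondary technicality is verifying that no denominators vanish during either propagation, which reduces to the nonvanishing of the corresponding cluster characters and is guaranteed by their explicit Laurent expressions.
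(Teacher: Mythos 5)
Your overall architecture coincides with the paper's (show $X_?$ is a cluster frieze, then show a cluster frieze with $f(P_i[1])=u_i$ is forced to agree with $X_?$ on the transjective component and, via the relation (\ref{eq:clusterfrieze}) at the mouth and mesh propagation, on the rigid objects of the exceptional tubes, finally quote \cite{CK2} for the bijection). However, your justification of the first step contains a genuine error: you claim that for \emph{every} indecomposable $M$ in $\CC_Q$ the $2$-Calabi--Yau property and indecomposability give $\dim\End_{\CC_Q}(M)=1$, and hence $\dim\Ext^1_{\CC_Q}(M,\tau M)=1$, so that the one-dimensional multiplication formula of \cite{CK2} applies to every AR triangle. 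This is false on the regular components. For modules one has $\End_{\CC_Q}(M)\simeq\End_{\kQ}(M)\oplus\Ext^1_{\kQ}(M,\tau^{-1}M)$, so for instance a quasi-simple module in a homogeneous tube has $\dim\End_{\CC_Q}(M)=2$, and even a \emph{rigid} quasi-simple in an exceptional tube of rank $2$ (these occur in types $\Daffine$ and $\Eaffine$ and are covered by the theorem) satisfies $\Ext^1_{\kQ}(M,\tau^{-1}M)\simeq D\Hom_{\kQ}(M,\tau^2M)=D\End_{\kQ}(M)\neq 0$, hence $\dim\End_{\CC_Q}(M)=2$; the same happens for rigid regulars of quasi-length $p_\lambda-1$. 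At all such vertices the hypothesis $\dim\Ext^1_{\CC_Q}(M,\tau M)=1$ of the Caldero--Keller multiplication theorem fails, so your derivation of the frieze relation $X_MX_{\tau M}=X_E+1$ breaks down precisely where it is needed to conclude that $X_?$ is a frieze on the tubes.

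The paper closes exactly this gap differently: on regular components it invokes \cite[Proposition 3.10]{CC}, which asserts $X_MX_{\tau M}=X_B+1$ for \emph{any} almost split sequence $0\fl\tau M\fl B\fl M\fl 0$ of $\kQ$-modules, with no hypothesis on $\dim\Ext^1$, and reserves the $2$-CY/Caldero--Keller argument for the transjective component, where $\End_{\CC_Q}(M)\simeq\k$ does hold. Your argument can be repaired either by citing that result, or by checking $\End_{\CC_Q}(M)\simeq\k$ at the specific regular vertices your propagation actually uses (quasi-length at most $p_\lambda-2$ in tubes of rank $p_\lambda\geq 3$, where it is true); but as written the blanket claim is wrong and the step is not justified. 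The remaining ingredients are fine: applying the multiplication formula directly to the shifted pair $(N_\lambda[k],S_e[k])$ (legitimate since $[1]$ is an autoequivalence and $\dim\Ext^1_{\CC_Q}(N_\lambda,S_e)=1$ by Lemmas \ref{lem:extension1inj} and \ref{lem:extension1proj}) is an acceptable substitute for the paper's Lemma \ref{lem:shiftautomorphism}, and your mesh-propagation in the tubes, with the inductive nonvanishing of the denominators, is essentially the Chebyshev-polynomial determination of Corollary \ref{corol:tubes}.
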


		We actually prove a slightly more general result. Namely, we prove that if $f$ is a cluster frieze on $\Gamma(\CC_Q)$ such that $f(P_i[1])=u_i$ for any $i \in Q_0$, then $f$ and $X_?$ coincide on the transjective component and the exceptional tubes of $\Gamma(\CC_Q)$.

		Strong cluster friezes allow moreover to realise cluster characters associated to modules in homogeneous tubes~:
		\begin{theorem}\label{theorem:strong}
			Let $\CC_{Q}$ be the cluster category of an euclidean quiver $Q$ and let $f$ be a strong cluster frieze on $\Gamma(\CC_{Q})$ with values in $\Q(u_i,i \in Q_0)$ such that $f(P_i[1])=u_i$ for any $i \in Q_0$. Then $f(M)=X_M$ for any indecomposable object $M$ in $\CC_Q$.
		\end{theorem}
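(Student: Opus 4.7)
The plan is to reduce to the case of indecomposable objects in homogeneous tubes, and then to use the strong cluster frieze condition together with a multiplication formula to handle those. By Theorem \ref{theorem:main}, in the slightly more general form noted immediately after its statement, $f$ and $X_?$ already coincide on every indecomposable object of the transjective component and of every exceptional tube of $\Gamma(\CC_Q)$. So it remains to show that $f=X_?$ on every indecomposable object of every homogeneous tube $\mathcal T_\lambda$, $\lambda \in \P^{\mathcal H}$.

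First, I would pin down the value of $f$ at the mouth of each homogeneous tube. Since $p_\lambda=1$, the strong cluster frieze equation (\ref{eq:clusterfrieze}) at $\lambda$ reduces to the single relation
$$f(N_\lambda)\,f(S_e) = f(B_\lambda) + f(B_\lambda').$$
The three objects $B_\lambda, B_\lambda', S_e$ are transjective, so by Theorem \ref{theorem:main} their frieze values equal their Caldero-Chapoton values. On the other hand, Lemmas \ref{lem:extension1inj} and \ref{lem:extension1proj} identify $B_\lambda$ and $B_\lambda'$ as the unique middle terms of non-split triangles between $N_\lambda$ and $S_e$; combined with the 2-Calabi-Yau duality
$$\Ext^1_{\CC_Q}(S_e,N_\lambda) \simeq D\,\Ext^1_{\CC_Q}(N_\lambda,S_e),$$
this shows both $\Ext^1$-spaces are one-dimensional and that the defining triangles supply their generators. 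The Caldero-Keller multiplication formula, in its 2-Calabi-Yau version, then yields
$$X_{N_\lambda}\,X_{S_e} = X_{B_\lambda} + X_{B_\lambda'},$$
and comparing with the frieze equation gives $f(N_\lambda)=X_{N_\lambda}$, hence $f(M_\lambda)=X_{M_\lambda}$ because $N_\lambda$ and $M_\lambda$ represent the same vertex of $\Gamma(\CC_Q)$ in a tube of rank one.

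Second, I would induct on quasi-length $r$. Writing $M_\lambda^{(r)}$ for the indecomposable of $\mathcal T_\lambda$ of quasi-length $r$, the mesh relations in this rank-one tube read
$$f(M_\lambda^{(1)})^2 = f(M_\lambda^{(2)})+1 \quad \text{and} \quad f(M_\lambda^{(r)})^2 = f(M_\lambda^{(r-1)})\,f(M_\lambda^{(r+1)}) + 1 \text{ for } r \geq 2,$$
so each $f(M_\lambda^{(r+1)})$ is uniquely determined by the two previous values. The Caldero-Chapoton map satisfies the same recursion on $\mathcal T_\lambda$, either as a consequence of the multiplication formula applied to the Auslander-Reiten triangles of $\mathcal T_\lambda$ (which realise the one-dimensional self-extensions $\Ext^1_{\CC_Q}(M_\lambda^{(r)},M_\lambda^{(r)})$), or from the mesh-type identities for $X_?$ on stable tubes recorded in \cite{Dupont:stabletubes}. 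Matching initial conditions from the first step closes the induction.

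The hard part is the first step: one must verify that the triangles supplied by Lemmas \ref{lem:extension1inj} and \ref{lem:extension1proj} are representatives of bases of $\Ext^1_{\CC_Q}(S_e,N_\lambda)$ and $\Ext^1_{\CC_Q}(N_\lambda,S_e)$ respectively, so that the multiplication formula applies in its simplest form without correction terms. Once this is checked, the induction in the second step is essentially formal, since both $f$ and $X_?$ are then determined on $\mathcal T_\lambda$ by the same recursion from the same initial data.
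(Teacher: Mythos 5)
Your argument is correct and follows essentially the same route as the paper: reduce via Theorem \ref{theorem:main} (in its stronger form) to the mouths of the homogeneous tubes, then combine the strong cluster frieze relation with the multiplication formula of Lemmas \ref{lem:extension1inj} and \ref{lem:extension1proj} and the transjectivity of $S_e$, $B_\lambda$, $B_\lambda'$; note that the one-dimensionality of the $\Ext^1$-spaces you flag as the ``hard part'' is already established in those lemmas. The only cosmetic difference is that where you propagate from the mouth to arbitrary quasi-length by an explicit mesh-relation induction (which needs the inductively known values $X_{M_\lambda^{(r)}}$ to be nonzero in $\Q(u_i, i\in Q_0)$ to divide), the paper simply invokes Proposition \ref{prop:Xfrieze} and Corollary \ref{corol:tubes}: both $f$ and $X_?$ are friezes on the tube and hence determined by their values at the mouth.
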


		Note that, if the objects $B_\lambda,B_\lambda'$ can be explicitly constructed, Theorem \ref{theorem:main} (and Theorem \ref{theorem:strong}) provide a purely combinatorial method for computing every cluster variable (and every cluster character, respectively) in a cluster algebra of euclidean type. In Section \ref{section:explicit}, we provide such explicit constructions for any euclidean quiver.

	\subsection{Organisation of the paper}
		In Section \ref{section:Xfrieze}, we prove that the cluster character $X_?$ induces a frieze on the Auslander-Reiten quiver of any cluster category. Section \ref{section:proof} is dedicated to the proof of Theorems \ref{theorem:main} and \ref{theorem:strong}. As an application, we use friezes of numbers in Section \ref{section:GrM} in order to compute the Euler characteristics of the so-called \emph{complete quiver grassmannians}. Section \ref{section:explicit} provides a case-by-case analysis in order to construct explicitly the objects $B_\lambda$ and $B_\lambda'$. As an application, we provide an algorithm for computing all cluster variables in cluster algebras associated to euclidean quivers in Section \ref{section:algorithms}. In Section \ref{section:varE6} we use this algorithm to compute explicitly the regular cluster variables in type $\Eaffine_6$. Finally, in Section \ref{section:GrMtypeE}, we use it to compute the Euler characteristics of the complete quiver grassmannians of quasi-simple regular modules in types $\Eaffine_6$, $\Eaffine_7$ and $\Eaffine_8$.

\section{Cluster characters and friezes}\label{section:Xfrieze}
	We recall the definition of the cluster character associated to an acyclic quiver $Q$. For any $\kQ$-module $M$, we denote by $\ddim M \in \Z_{\geq 0}^{Q_0}$ its dimension vector. For any $\textbf e \in \Z_{\geq 0}^{Q_0}$, the \emph{quiver grassmannian of $M$ of dimension $\textbf e$} is 
	$$\Gr_{\textbf e}(M)=\ens{N \subset M | \ddim N = \textbf e}.$$
	This is a projective variety and we denote by $\chi(\Gr_{\textbf e}(M))$ its Euler characteristic with respect to the simplicial (or \'etale) cohomology if $\k = \C$ (or if $\k$ is arbitrary, respectively).
	
	We denote by $\<-,-\>$ the Euler form on $\kQ$-mod, which here coincides with the Tits form, given by
	$$\<M,N\>=\dim \Hom_{\kQ}(M,N) - \dim \Ext^1_{\kQ}(M,N).$$
	It only depends on $\ddim M$ and $\ddim N$ (see \cite{ASS}).

	\begin{defi}[\cite{CC}]
		Let $Q$ be an acyclic quiver. The \emph{cluster character associated to $Q$} is the map 
		$$X_?:\Ob(\CC_Q) \fl \Z[\textbf u^{\pm 1}]$$
		defined as follows~: 
		\begin{enumerate}
			\item[$(\iota)$] If $M$ is an indecomposable $\kQ$-module
				\begin{equation}\label{eq:XM}
					X_M=\sum_{\textbf e \in \Z_{\geq 0}^{Q_0}} \chi(\Gr_{\textbf e}(M)) \prod_{i \in Q_0} u_i^{-\<\textbf e,S_i\>-\<S_i, \ddim M- \textbf e\>}~;
				\end{equation}
			\item[$(\iota\iota)$] If $M \simeq P_i[1]$, then
				$$X_M=X_{P_i[1]}~;$$
			\item[$(\iota\iota\iota)$] For any two objects $M,N \in \Ob(\CC_Q)$,
				$$X_{M \oplus N}=X_MX_N.$$
		\end{enumerate}
	\end{defi}
	Note that (\ref{eq:XM}) also holds for decomposable $\kQ$-modules \cite{CC}. Moreover $X_M=1$ if $M = 0$.

	It was proved in \cite{CK2} that $X_?$ induces a 1-1 correspondence between the set of indecomposable rigid objects in $\CC_Q$ and the set of cluster variables in $\mathcal A(Q)$. Thus, a cluster variable $x \in \mathcal A(Q)$ is called \emph{transjective} (or \emph{regular}) if $x=X_M$ for some indecomposable rigid transjective (or indecomposable rigid regular, respectively) object $M$ in $\CC_Q$.

	\begin{prop}\label{prop:Xfrieze}
		Let $Q$ be an acyclic quiver. Then the cluster character associated to $Q$ induces a $\Z[\textbf u^{\pm 1}]$-frieze on $\Gamma(\CC_{Q})$.
	\end{prop}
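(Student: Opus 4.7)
The plan is to rewrite the frieze condition in terms of Auslander-Reiten triangles and then invoke a cluster multiplication formula.

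First, I would unpack what needs to be verified. Fix an indecomposable object $M \in \CC_Q$; the frieze condition at the vertex of $\Gamma(\CC_Q)$ corresponding to $M$ is
$$X_M X_{\tau M} = \prod_{\substack{\alpha \in \Gamma(\CC_Q)_1 \\ \alpha : L \fl M}} X_L \; + \; 1.$$
Since the Auslander-Reiten triangle ending at $M$ has the form $\tau M \fl E \fl M \fl \tau M[1]$ with middle term $E = \bigoplus_L L^{m_{LM}}$, where $m_{LM}$ is the number of arrows $L \fl M$ in the AR quiver, the multiplicativity clause $(\iota\iota\iota)$ of the definition of $X_?$ gives $\prod_{\alpha : L \fl M} X_L = X_E$. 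So the goal reduces to the mesh relation
$$X_M X_{\tau M} = X_E + 1.$$

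Next, I would apply the Caldero--Keller cluster multiplication formula (established in the 2-Calabi-Yau setting of $\CC_Q$): if $X, Y \in \Ob(\CC_Q)$ are indecomposable with $\dim_{\k}\Ext^1_{\CC_Q}(X,Y) = 1$, and if the two non-split triangles are $Y \fl B \fl X \fl Y[1]$ and $X \fl B' \fl Y \fl X[1]$, then $X_X X_Y = X_B + X_{B'}$. Applying this with $X = M$ and $Y = \tau M$ is legitimate because the AR-triangle at $M$ certifies $\dim \Ext^1_{\CC_Q}(M, \tau M) = 1$. One of the two non-split triangles is exactly the AR-triangle with middle term $E$, so $X_M X_{\tau M} = X_E + X_{E'}$, where $E'$ is the middle term of the ``partner'' triangle $M \fl E' \fl \tau M \fl M[1]$.

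It then remains to show $X_{E'} = 1$, equivalently $E' = 0$. This is where the 2-Calabi-Yau property of $\CC_Q$ intervenes: under the orbit identification, the suspension $[1]$ coincides with the Auslander-Reiten translation $\tau$ on $\CC_Q$, so a generator of $\Ext^1_{\CC_Q}(\tau M, M) \simeq D \Ext^1_{\CC_Q}(M, \tau M)$ can be realised as an isomorphism $\tau M \xrightarrow{\sim} M[1]$. The resulting triangle $M \fl 0 \fl \tau M \xrightarrow{\sim} M[1]$ has vanishing middle term, so $X_{E'} = X_0 = 1$, which completes the verification of the frieze identity.

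The main obstacle is the careful justification of the partner-triangle vanishing: one must ensure that the connecting map $\tau M \fl M[1]$ coming from the dual $\Ext$ is genuinely an isomorphism. For transjective $M$ this is unproblematic since $\End_{\CC_Q}(M) = \k$, but for non-rigid indecomposables in homogeneous tubes the endomorphism ring in $\CC_Q$ is larger and the dimension count of the relevant $\Ext$-space needs to be read locally with respect to the AR structure. Once this step is in place, the frieze condition is a formal consequence of the Caldero--Keller formula and the multiplicativity of $X_?$.
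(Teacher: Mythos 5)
Your treatment of the rigid vertices is essentially the paper's own argument: there $\Ext^1_{\CC_Q}(M,\tau M)\simeq D\End_{\CC_Q}(M)\simeq \k$, the Caldero--Keller one-dimensional multiplication formula of \cite{CK2} applies, and the partner triangle has zero middle term because its connecting morphism, a non-zero element of $\End_{\CC_Q}(M[1])\simeq \k$, is an isomorphism. The genuine gap is at the non-rigid indecomposables, which you flag in your last paragraph but do not close. Since $\tau\simeq[1]$ on $\CC_Q$, the 2-Calabi-Yau duality gives $\Ext^1_{\CC_Q}(M,\tau M)=\Hom_{\CC_Q}(M,M[2])\simeq D\End_{\CC_Q}(M)$, and this space has dimension at least $2$ whenever $M$ is not rigid: for instance, for a quasi-simple module $M$ in a homogeneous tube of a euclidean quiver one finds $\dim\End_{\CC_Q}(M)=2$. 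So the existence of the AR-triangle does \emph{not} certify one-dimensionality, the hypothesis of the Caldero--Keller formula genuinely fails for these $M$, and ``reading the dimension count locally with respect to the AR structure'' is not an argument. Moreover, even granting a multiplication statement, when $\End_{\CC_Q}(M[1])$ is local of dimension $\geq 2$ a non-zero map $\tau M\fl M[1]$ may be nilpotent rather than invertible, so your partner triangle need not have vanishing middle term; the mesh relation $X_MX_{\tau M}=X_E+1$ cannot be extracted this way for non-rigid $M$.

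The paper closes exactly this case with a different ingredient: for $M$ regular, the almost split sequence $0\fl \tau M\fl B\fl M\fl 0$ lies in $\modd\kQ$, and Proposition 3.10 of \cite{CC} gives $X_MX_{\tau M}=X_B+1$ directly, with no hypothesis on $\dim\Ext^1$; that formula is proved at the level of module categories (via the geometry of submodule grassmannians), not via the cluster multiplication theorem. To repair your proof, substitute this almost-split-sequence formula (or an equivalent statement valid without the one-dimensionality assumption) for the Caldero--Keller step whenever $M$ is not rigid; for rigid $M$, transjective or regular, your argument stands and coincides with the paper's.
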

	\begin{proof}
		Let $M$ be an indecomposable object in $\CC_Q$. Then either $M$ is regular or $M$ is transjective. If $M$ is a regular $\kQ$-module, it follows from \cite[Proposition 3.10]{CC} that $X_MX_{\tau M}=X_B+1$ where 
		$$0 \fl \tau M \fl B \fl M \fl 0$$
		is an almost split sequence of $\kQ$-modules. It thus follows that $X_?$ induces a frieze on the regular component $\mathcal T$ containing $M$. Explicitly, the frieze is given by $f(i,n)=X_{R_i^{(n)}}$ for any $i \in \Z/p\Z$ and $n \geq 1$ where the $R_i$ for $i \in \Z/p\Z$ denote the quasi-simple modules in $\mathcal T$ ordered in such a way that $\tau R_i \simeq R_{i-1}$ for any $i \in \Z/p\Z$ and $R_i^{(n)}$ is the unique indecomposable regular $\kQ$-module with quasi-socle $R_i$ and quasi-length $n$.

		If $M$ is transjective, 
		$$\Ext^1_{\CC_Q}(M[1],M) \simeq \Hom_{\CC_Q}(M[1],M[1]) \simeq \End_{\CC_Q}(M[1]) \simeq \k.$$
		It follows from \cite[Theorem 2]{CK2} that 
		$$X_MX_{M[1]}=X_B + X_{B'}$$
		where $B$ and $B'$ are the unique objects such that there exist non-split triangles
		$$M[1] \fl B \fl M \fl M[2] \textrm{ and }M \fl B' \fl M[1] \fl M[1].$$
		Since $\End_{\CC_Q}(M[1]) \simeq \k$, $B'=0$ and thus $X_MX_{M[1]}=X_B+1$. Moreover, since $\Ext^1_{\CC_Q}(M[1],M) \simeq \k,$ the non-split triangle $M[1] \fl B \fl M \fl M[2]$ is almost split. Thus, $X_?$ induces a frieze on the transjective component. 

		It follows that $X_?$ induces a frieze on each connected component of $\Gamma(\CC_Q)$ and thus on $\Gamma(\CC_Q)$. By definition, $X_M \in \Z[\textbf u^{\pm 1}]$ for any object $M$ in $\CC_Q$ so that the frieze induced by $X_?$ takes its values in the commutative unitary ring $\Z[\textbf u^{\pm 1}]$. This completes the proof.
	\end{proof}

\section{Proofs of Theorems \ref{theorem:main} and \ref{theorem:strong}}\label{section:proof}
	The proofs consist of several steps. First, we prove that if $Q$ is any acyclic quiver and $M$ is a transjective object $M$, then $X_M$ can be computed via a frieze on the transjective component. If $Q$ is euclidean, we prove in Section \ref{ssection:tubes} that a frieze on a tube in $\Gamma(\CC_Q)$ is entirely determined by its values at the mouth of the tube. In Section \ref{ssection:qs}, we give relations between cluster characters associated to quasi-simple modules in tubes and cluster characters associated to transjective objects. All these results are applied in Sections \ref{ssection:mainproof} and \ref{ssection:strongroof} in order to prove Theorems \ref{theorem:main} and \ref{theorem:strong}.

	\subsection{The transjective component}\label{ssection:transjective}
		Given a locally finite quiver $Q$, we denote by $\Z Q$ its \emph{repetition quiver}, that is, the quiver whose points are labelled by $\Z \times Q_0$ and arrows are given by $(n,i) \fl (n,j)$ and $(n-1,j) \fl (n,i)$ if $i \fl j \in Q_1$. It is a stable translation quiver for the translation $\tau (n,i) = (n-1,i)$ for any $n \in \Z$, $i \in Q_0$. A \emph{section} in a repetition quiver $\Z Q$ is a full connected subquiver $\Sigma$ in $\Z Q$ such that $\Z \Sigma \simeq \Z Q$ (see \cite{ASS}).

		\begin{lem}\label{lem:asection}
			Let $Q$ be an acyclic quiver and $R$ be a commutative ring. Let $f$ be a $R$-frieze on $\Z Q$. Then $f$ is entirely determined by the image of a section in $\Z Q$.
		\end{lem}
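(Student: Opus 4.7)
The plan is to propagate values from $\Sigma$ to the rest of $\Z Q$ via the mesh relations. Since $\Sigma$ is a section, every $\tau$-orbit in $\Z Q$ meets $\Sigma_0$ in exactly one point, so the vertex set decomposes as $\bigsqcup_{n\in\Z}\tau^n\Sigma_0$. By induction on $|n|$ it therefore suffices to show that $f|_{\tau^{-1}\Sigma}$ is determined by $f|_\Sigma$, the step to $\tau\Sigma$ being symmetric.

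Choose a topological ordering $\Sigma_0=\{i_1,\ldots,i_r\}$ of the acyclic quiver $\Sigma\simeq Q$ with sources first, so that every arrow of $\Sigma$ has the form $i_k\to i_l$ with $k<l$. Using the defining property of a section, every successor of $i_k$ in $\Z Q$ lies in $\Sigma\cup\tau^{-1}\Sigma$; combined with the ordering and the translation bijection (which identifies an arrow $i_k\to\tau^{-1}i_m$ in $\Z Q$ with an arrow $i_m\to i_k$ of $\Sigma$), these successors are
$$\{\,i_l\mid l>k,\ i_k\to i_l\in\Sigma_1\,\}\cup\{\,\tau^{-1}i_m\mid m<k,\ i_m\to i_k\in\Sigma_1\,\}.$$
By the translation bijection they coincide with the predecessors of $\tau^{-1}i_k$ in $\Z Q$, so the mesh relation at $\tau^{-1}i_k$ reads
$$f(\tau^{-1}i_k)\,f(i_k) \;=\; \prod_{j\to\tau^{-1}i_k} f(j)+1,$$
where every factor $f(j)$ on the right is already known: either from the data $f|_\Sigma$ (if $j=i_l\in\Sigma$) or from a previous step of the knitting (if $j=\tau^{-1}i_m$ with $m<k$). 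Running the inner induction across $k=1,\ldots,r$ determines $f|_{\tau^{-1}\Sigma}$; iterating and mirroring the argument (with a sinks-first order) yields $f|_{\tau^n\Sigma}$ for every $n\in\Z$.

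The main subtlety is that the displayed equation pins down $f(\tau^{-1}i_k)$ only after division by $f(i_k)$, so the conclusion is most cleanly read as a uniqueness statement: two friezes that coincide on $\Sigma$ must coincide on $\Z Q$, provided the values on $\Sigma$ are non-zero-divisors of $R$. This mild hypothesis holds automatically in every application of the lemma in the paper, where $R$ is $\Z[\textbf{u}^{\pm 1}]$ or $\Q(u_i,\,i\in Q_0)$.
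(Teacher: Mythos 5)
Your proof is correct and follows essentially the same route as the paper's: an inductive knitting argument that propagates values away from $\Sigma$ slice by slice using the frieze relation at each mesh and division by the already-known value at the translate (the paper organizes the very same induction via a lexicographic order on $\Z_{\geq 0}\times \Sigma_0$ instead of your outer induction on $n$ with an inner topological order). Your closing caveat is a fair point---the paper divides without comment---but note that to iterate past the first slice you need the value you divide by at \emph{every} vertex reached (not only the values on $\Sigma$) to be a non-zero-divisor, a condition which indeed holds in all the rings where the lemma is applied in the paper.
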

		\begin{proof}
			Let $f$ and $g$ be $R$-friezes on $\Z Q$ such that $f$ and $g$ coincide on a section $\Sigma$ in $\Z Q$. Since $\Z Q \simeq \Z \Sigma$, we can label the points in $\Z Q$ by $\Z \times \Sigma_0$ such that points in $\Sigma$ correspond to $\ens 0 \times \Sigma_0$. Since $\Sigma$ is acyclic, we define a partial order on $\Sigma_0$ by taking the closure $\prec$ of the order $i \prec_e j$ if $j \fl i$ in $Q_0$. 
		
			We now prove that for any $n \geq 0$ and any $i \in \Sigma_0$, $f(n,i)=g(n,i)$. We put on $\Z_{\geq 0} \times \Sigma_0$ the lexicographic order $\prec$ induced by $\leq$ and $\prec$. We prove the claim by induction on this lexicographic order. If $n=0$, the result holds by hypothesis. Fix thus $n \geq 1$ and $i \in \Sigma_0$ and assume that $f(m,j)=g(m,j)$ for any $(j,m) \prec (i,n)$. By definition of the frieze we have 
			$$f(n,i)f(n-1,i)=\prod_{(m,j) \fl (n,i)} f(m,j)+1$$
			and 
			$$g(n,i)g(n-1,i)=\prod_{(m,j) \fl (n,i)} g(m,j)+1.$$
			But if $(m,j) \fl (n,i)$, then it follows from the definition of $\Z\Sigma$ that either $m=n-1$ and $j \fl i$ or $m=n$ and $i \fl j$. Hence, in any case, $(m,j) \prec (n,i)$. Thus,
			$$f(n,i)=\frac{\prod_{(m,j) \fl (n,i) \in \Sigma_1} f(m,j)+1}{f(n-1,i)}=\frac{\prod_{(m,j) \fl (n,i) \in \Sigma_1} g(m,j)+1}{g(n-1,i)}=g(n,i).$$

			If $n \leq 0$, we use the same argument with the dual order on $\Z_{\leq 0} \times \Sigma_0$.
		\end{proof}

		\begin{corol}\label{corol:transjective}
			Let $Q$ be an acyclic quiver. Let $f$ be a frieze on $\Gamma(\CC_Q)$ such that $f(P_i[1])=u_i$ for every $i \in Q_0$. Then for any indecomposable transjective object $M$, 
			$$f(M)=X_M.$$
			In particular, $f$ induces a bijection between the set of indecomposable transjective objects in $\CC_Q$ and the set of transjective cluster variables in $\mathcal A(Q,\textbf u)$.
		\end{corol}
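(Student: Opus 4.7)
The plan is to invoke Lemma \ref{lem:asection} to reduce the problem to checking agreement on a single section of the repetition quiver.

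First, I would observe that by Proposition \ref{prop:Xfrieze} the cluster character $X_?$ is itself a frieze on $\Gamma(\CC_Q)$, and hence its restriction to the transjective component $\mathcal P$ is a frieze on $\mathcal P \simeq \Z Q$; by hypothesis the restriction of $f$ to $\mathcal P$ is also a frieze on $\Z Q$. Thus both $f_{|\mathcal P}$ and $X_?|_{\mathcal P}$ are $R$-friezes on $\Z Q$ (for suitable $R$), and Lemma \ref{lem:asection} reduces the problem to exhibiting a section of $\Z Q$ on which the two functions coincide.

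The natural candidate is the section $\Sigma = \{P_i[1] \mid i \in Q_0\}$. I would verify that this is indeed a section of $\mathcal P \simeq \Z Q$: the $P_i[1]$ form a full connected subquiver isomorphic to $Q^{\mathrm{op}}$ (through the shift of the projective slice), and translating this subquiver by powers of $\tau$ sweeps out all the transjective component, so $\Z \Sigma \simeq \Z Q$. On this section, agreement is immediate: by hypothesis $f(P_i[1]) = u_i$, and from the definition of the Caldero-Chapoton map one has $X_{P_i[1]} = u_i$ for every $i \in Q_0$. Applying Lemma \ref{lem:asection}, it follows that $f(M) = X_M$ for every indecomposable transjective object $M$.

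For the \emph{in particular} statement, I would first note that every indecomposable transjective object in $\CC_Q$ is rigid (post\-projective and preinjective modules have no self-extensions, and $\Ext^1_{\CC_Q}(P_i[1],P_i[1])$ vanishes by the 2-Calabi-Yau property together with projectivity of $P_i$). Hence the already proved equality $f = X_?$ on transjective objects combined with the Caldero-Keller bijection from \cite{CK2} between indecomposable rigid objects of $\CC_Q$ and cluster variables of $\mathcal A(Q,\textbf u)$ yields the desired bijection upon restriction to the transjective part.

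The main (and essentially only) point requiring attention is the verification that the shifted projectives really do form a section of the translation quiver $\Z Q$ in the appropriate orientation, so that Lemma \ref{lem:asection} can be applied verbatim; everything else is a direct chase through the definitions.
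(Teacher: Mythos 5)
Your proof is correct and follows essentially the same route as the paper: both restrict $f$ and $X_?$ (a frieze by Proposition \ref{prop:Xfrieze}) to the transjective component $\mathcal P \simeq \Z Q$, check agreement on the section $\Sigma$ formed by the $P_i[1]$, and conclude by Lemma \ref{lem:asection}, with the final bijection coming from the Caldero--Keller correspondence of \cite{CK2}. Your added verifications (that $\Sigma$ is indeed a section and that transjective indecomposables are rigid) are accurate and only make explicit what the paper leaves implicit.
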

		\begin{proof}
			The transjective component $\mathcal P$ of $\Gamma(\CC)$ is isomorphic to $\Z Q$. The full subquiver $\Sigma$ whose points correspond to $P_i[1]$, with $i \in Q_0$, is a section in $\Z Q$. By Proposition \ref{prop:Xfrieze}, the restrictions of $X_?$ and $f$ to $\mathcal P$ are friezes and it follows from Lemma \ref{lem:asection} that they are entirely determined by their values on $\Sigma$. Since $f$ and $X_?$ coincide on $\Sigma$, they coincide on $\mathcal P$. 

			The second statement follows from \cite[Theorem 4]{CK2}.
		\end{proof}

		\begin{rmq}
			For acyclic quivers, Corollary \ref{corol:transjective} provides a representation-theoretic interpretation of the combinatorial construction of cluster variables with friezes provided in \cite{ARS:frises}. A similar interpretation will be provided for acyclic valued graphs in \cite{DN:nsl}.
		\end{rmq}

	\subsection{Friezes on tubes}\label{ssection:tubes}
			Let $\Ai$ be the locally finite quiver with point set $\Z_{>0}$ and arrows $i \fl i+1$ for any $i \in \Z_{>0}$. Let $\Z \Ai$ be its repetition quiver. We recall that a (stable) \emph{tube} $\mathcal T$ is a translation quiver isomorphic to $\Z\Ai/(\tau^p)$ for some integer $p \geq 1$, called the \emph{rank} of $\mathcal T$. Points in $\mathcal T$ can be labelled by $\Z/p\Z \times \Z_{>0}$ and the translation on $\mathcal T$ induced by the translation on $\Z\Ai$ is thus given by $\tau (i,n) = (i-1,n)$ for any $i \in \Z/p\Z$, $n \in \Z_{>0}$. The \emph{mouth} of a tube $\mathcal T$ is the set of points $\ens{(i,1)| i \in \Z/p\Z}$.

			\begin{defi}[\cite{Dupont:stabletubes}]
				Let $T_i, i \in \Z_{\geq 0}$ be a family of indeterminates over $\Z_{\geq 0}$. We define the \emph{$n$-th generalised Chebyshev polynomial} $P_n$ as follows. First, $P_{-1}=0$, $P_{0}=1$ and for any $n \geq 1$, $P_n$ is the polynomial in $\Z[T_0, \ldots, T_{n-1}]$ given by 
				$$P_{n+1}(T_0, \ldots, T_n)=T_n P_n(T_0, \ldots, T_{n-1})-P_{n-1}(T_0, \ldots, T_{n-2}).$$
			\end{defi}

			Note that, for any $n \geq 1$, 
			$$P_n(T_0,\ldots, T_{n-1})
			=\det\left[\begin{array}{ccccc}
				T_{n-1} & 1 & & & (0)\\
				1 & \ddots & \ddots & \\
				& \ddots & \ddots & \ddots  \\
				&& \ddots & \ddots & 1 \\
				(0) &&& 1 & T_0
			\end{array}\right] \in \Z[T_0, \ldots, T_{n-1}].$$

			If $R$ is an arbitrary commutative unitary ring. The ring homomorphism $\Z \fl R$ sending $1$ to $1_R$ allows to view each generalised Chebyshev polynomial $P_n$ as a polynomial function in $n$ variables with coefficients in $R$. It was observed in \cite{Dupont:stabletubes} that friezes on tubes are entirely governed by generalised Chebyshev polynomials in the following sense.
			\begin{theorem}[\cite{Dupont:stabletubes}]\label{theorem:Chebyshev}
				Let $\mathcal T$ be a tube of rank $p \geq 1$ and $f$ be a frieze on $\mathcal T$. Then for any $i \in \Z/p\Z$ and any $n \geq 1$, we have 
				$$f(i,n)=P_n(f(i,1), \ldots, f(i+n-1,1)).$$ \hfill \qed
			\end{theorem}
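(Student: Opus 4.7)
The plan is to proceed by induction on the quasi-length $n \ge 1$, proving the formula $f(i,n) = P_n(f(i,1), \ldots, f(i+n-1,1))$ for every $i \in \Z/p\Z$ simultaneously. The base cases are immediate: $n=1$ reduces to $P_1(T_0) = T_0$, and for $n=2$ the unique arrow into the mouth vertex $(i+1, 1)$ comes from $(i, 2)$, so the corresponding mesh relation reads $f(i+1, 1) f(i, 1) = f(i, 2) + 1$, yielding $f(i, 2) = f(i+1, 1) f(i, 1) - 1 = P_2(f(i, 1), f(i+1, 1))$.

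For the inductive step, fix $n \ge 2$ and assume the formula at all quasi-lengths up to $n$. The two arrows into $(i+1, n)$ in the tube come from $(i, n+1)$ and $(i+1, n-1)$, so the mesh relation at $(i+1, n)$ reads $f(i+1, n) \cdot f(i, n) = f(i+1, n-1) \cdot f(i, n+1) + 1$. Applying the inductive hypothesis to rewrite $f(i+1, n)$, $f(i, n)$ and $f(i+1, n-1)$ as continuants in the mouth values, and then invoking the Desnanot--Jacobi type identity
$$P_n(T_0, \ldots, T_{n-1}) \cdot P_n(T_1, \ldots, T_n) - P_{n-1}(T_1, \ldots, T_{n-1}) \cdot P_{n+1}(T_0, \ldots, T_n) = 1,$$
which I would verify by a cofactor expansion of the tridiagonal determinant defining $P_{n+1}$ (observing that the two off-corner minors are triangular with $1$'s on the diagonal), leads to the equality $f(i+1, n-1) \cdot f(i, n+1) = f(i+1, n-1) \cdot P_{n+1}(f(i, 1), \ldots, f(i+n, 1))$ in $R$.

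The main obstacle is then cancelling the factor $f(i+1, n-1) = P_{n-1}(f(i+1, 1), \ldots, f(i+n-1, 1))$, which in an arbitrary commutative ring $R$ may be a zero divisor. I would handle this by first running the entire argument in the universal setting: over the polynomial ring $\Z[x_0, \ldots, x_{p-1}]$, one checks via the same Desnanot--Jacobi identity that the assignment $(i, n) \mapsto P_n(x_i, \ldots, x_{i+n-1})$ is itself a frieze on the tube, and there the relevant polynomials $P_{n-1}(\ldots)$ are nonzero and hence not zero divisors, so that the induction carries through cleanly. A specialisation along the ring homomorphism $x_i \mapsto f(i, 1)$ then transfers the identity to the original frieze $f$ with values in $R$.
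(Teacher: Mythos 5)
Your inductive set-up is correct and is the natural route (note that the paper itself gives no proof of this theorem: it is quoted from the cited reference with a qed, so there is no in-paper argument to compare against). The mesh relations in the tube are the ones you write, the base cases are right, and the continuant identity $P_n(T_0,\ldots,T_{n-1})\,P_n(T_1,\ldots,T_n)-P_{n-1}(T_1,\ldots,T_{n-1})\,P_{n+1}(T_0,\ldots,T_n)=1$, proved by condensation of the tridiagonal determinant, is exactly the right key lemma. The gap is in your last step. Specialising the universal frieze $(i,n)\mapsto P_n(x_i,\ldots,x_{i+n-1})$ along $x_i\mapsto f(i,1)$ proves only that the Chebyshev values constitute \emph{some} $R$-frieze on $\mathcal T$ with the same mouth values as $f$; it does not identify that frieze with $f$. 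Your homomorphism $\Z[x_0,\ldots,x_{p-1}]\fl R$ sends the universal value $P_n(x_i,\ldots,x_{i+n-1})$ to $P_n(f(i,1),\ldots,f(i+n-1,1))$, not to $f(i,n)$: the elements $f(i,n)$ with $n\geq 2$ are arbitrary elements of $R$ constrained only by the frieze relations, and equating them with the specialised continuants is the whole content of the theorem --- it is precisely the uniqueness (``a frieze on a tube is determined by its mouth'') that the cancellation of $f(i+1,n-1)$ was supposed to deliver. The argument is circular at this point.

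Moreover the obstacle is genuine, not an artifact of your method: over an arbitrary commutative ring the statement is false, so no universal detour can replace the cancellation. Take $p=1$ and $R=\Z/4\Z$; the frieze relations on the homogeneous tube read $f(1)^2=f(2)+1$ and $f(n)^2=f(n-1)f(n+1)+1$ for $n\geq 2$, and the $4$-periodic sequence $0,3,2,1,0,3,2,1,\ldots$ satisfies all of them, whereas $P_3(0,0,0)=0\neq 2$. (Even over a field, a mouth value $0$ leaves $f(3)$ completely unconstrained by the relations.) Your induction does close as soon as one knows that the elements $P_n(f(i,1),\ldots,f(i+n-1,1))$ are non-zero-divisors in $R$ for all $i$ and $n$, since then $f(i+1,n-1)$ can be cancelled at each step; some hypothesis of this kind must be understood in the statement as quoted, and it does hold in the situations where the paper applies the theorem (mouth values given by cluster characters, hence nonzero Laurent polynomials in an integral domain, or positive integers for the $\Z$-friezes). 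With such a hypothesis added, your first two paragraphs already constitute a complete proof and the universal specialisation is unnecessary.
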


			\begin{corol}\label{corol:tubes}
				A frieze on a tube is entirely determined by its values on the mouth of the tube. \hfill \qed
			\end{corol}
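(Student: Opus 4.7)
The plan is to derive the corollary directly from Theorem \ref{theorem:Chebyshev}. Let $\mathcal T$ be a tube of rank $p \geq 1$ and let $f, g$ be two friezes on $\mathcal T$ (with values in the same commutative ring $R$) that agree on the mouth, that is, $f(i,1) = g(i,1)$ for every $i \in \Z/p\Z$. The goal is to conclude that $f(i,n) = g(i,n)$ for every $i \in \Z/p\Z$ and every $n \geq 1$.

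Fix $i \in \Z/p\Z$ and $n \geq 1$. Theorem \ref{theorem:Chebyshev} applied to $f$ gives
$$f(i,n) = P_n(f(i,1), f(i+1,1), \ldots, f(i+n-1,1)),$$
and the same theorem applied to $g$ gives
$$g(i,n) = P_n(g(i,1), g(i+1,1), \ldots, g(i+n-1,1)).$$
Since $P_n$ is a fixed polynomial with integer coefficients (viewed as a polynomial function with coefficients in $R$ via the canonical ring map $\Z \fl R$) and since $f$ and $g$ coincide on all entries $(i+k,1)$ for $0 \leq k \leq n-1$, the two right-hand sides are equal, so $f(i,n) = g(i,n)$.

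Since $(i,n)$ was arbitrary and every point of $\mathcal T$ is of this form, $f = g$. Hence a frieze on $\mathcal T$ is entirely determined by its restriction to the mouth, which is what was to be shown. There is no substantive obstacle: the corollary is a direct reading of Theorem \ref{theorem:Chebyshev}, which already encodes the recursive structure imposed by the frieze relations along the rays of the tube.
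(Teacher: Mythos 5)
Your proof is correct and is exactly the argument the paper intends: the corollary is stated with an immediate \qed because it follows directly from Theorem \ref{theorem:Chebyshev}, since the value of a frieze at any point $(i,n)$ is the fixed polynomial $P_n$ evaluated at the mouth values $f(i,1),\ldots,f(i+n-1,1)$. Spelling out the two-friezes comparison as you do is just an explicit rendering of that same one-line deduction.
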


			\begin{rmq}
				An interesting consequence is that if $R$ is a ring and $\mathcal T$ is a tube with mouth $\mathcal M$, every function $\mathcal M \fl R$ can be completed into an $R$-frieze $\mathcal T \fl R$. This contrasts with the situation of the repetition quivers $\Z Q$ of an acyclic quiver $Q$ where if $\Sigma$ is a section in $\Z Q$, there may exist functions $\Sigma \fl R$ which do not extend to a $R$-frieze on $\Z Q$. For example, it is easily seen that if $Q$ is a quiver of Dynkin type $\mathbb A_2$, there exists no $\Z$-frieze $f$ on $\Z Q$ such that $f$ equals 2 on a section in $\Z Q$.
			\end{rmq}
	
	\subsection{Characters associated to quasi-simple modules}\label{ssection:qs}
		\begin{lem}\label{lem:shiftautomorphism}
			Let $Q$ be an acyclic quiver. Then the map $\sigma:X_M \mapsto X_{M[1]}$ induces a $\Z$-algebra automorphism of $\mathcal A(Q)$.
		\end{lem}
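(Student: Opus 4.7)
The plan is to exploit the fact that $[1]$ is a triangulated autoequivalence of $\CC_Q$. By \cite[Theorem 4]{CK2}, $X_?$ induces a bijection between isomorphism classes of indecomposable rigid objects in $\CC_Q$ and cluster variables of $\mathcal A(Q)$, so the prescription $X_M \mapsto X_{M[1]}$ already defines an unambiguous permutation of this generating set of $\mathcal A(Q)$. It then suffices to promote this permutation to a ring automorphism.

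To construct $\sigma$ as a ring map I would use a seed-change argument. Since $T = \bigoplus_{i \in Q_0} P_i[1]$ is the initial cluster-tilting object of $\CC_Q$ and $[1]$ is an autoequivalence, $T[1] = \bigoplus_{i \in Q_0} P_i[2]$ is again cluster-tilting, with $\End_{\CC_Q}(T[1]) \simeq \End_{\CC_Q}(T)$. Consequently the cluster $(X_{P_i[2]})_{i \in Q_0}$ of $\mathcal A(Q)$ has the same exchange quiver as the initial cluster $(u_i)_{i \in Q_0}$. Since iterated mutations from any cluster generate all cluster variables, the subalgebra of $\mathcal A(Q)$ generated by mutations from $(X_{P_i[2]})_{i \in Q_0}$ is $\mathcal A(Q)$ itself; hence the assignment $u_i \mapsto X_{P_i[2]}$ extends uniquely to a $\Z$-algebra endomorphism $\sigma$ of $\mathcal A(Q)$. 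Applying the same construction to $[-1]$ produces a two-sided inverse, so $\sigma$ is a $\Z$-algebra automorphism.

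Finally I would identify $\sigma$ with the map $X_M \mapsto X_{M[1]}$ on every indecomposable rigid $M$ by induction on mutation distance from $T$. Each exchange relation $X_M X_{M^*} = X_B + X_{B'}$ arises via \cite[Theorem 2]{CK2} from a pair of exchange triangles $M \to B \to M^* \to M[1]$ and $M^* \to B' \to M \to M^*[1]$ in $\CC_Q$; applying the triangulated functor $[1]$ to these triangles produces the exchange triangles for the shifted cluster-tilting mutation, so $\sigma$ transports the original relation to $X_{M[1]} X_{M^*[1]} = X_{B[1]} + X_{B'[1]}$, forcing $\sigma(X_{M^*}) = X_{M^*[1]}$ once the inductive hypothesis provides $\sigma(X_M) = X_{M[1]}$.

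The main obstacle is the seed-change step, namely verifying that a cluster whose exchange quiver coincides with the initial one actually induces a ring \emph{self}-map of $\mathcal A(Q)$ rather than merely an abstract isomorphism between two copies. The point is that $\mathcal A(Q) \subset \Z[\textbf u^{\pm 1}]$ is recovered by iterated mutation equally from any of its clusters, so the universal description of $\mathcal A(Q)$ via the seed $(Q,(X_{P_i[2]})_{i \in Q_0})$ genuinely produces an endomorphism of $\mathcal A(Q)$ itself.
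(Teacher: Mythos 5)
Your proof is correct, but it reaches the statement by a different route than the paper. The paper's argument is a one-step verification: it invokes the presentation of $\mathcal A(Q)$ from \cite{CK2} as the $\Z$-algebra generated by the characters $X_M$ of rigid objects subject to the exchange relations $X_MX_{M^*}=X_B+X_{B'}$, and then simply observes that, $[1]$ being a triangulated autoequivalence, it permutes exchange pairs and their exchange triangles, so the assignment $X_M\mapsto X_{M[1]}$ respects the defining relations; the inverse is induced by $[-1]$. You instead construct the automorphism first, as the strong isomorphism (in the sense of \cite{cluster2}) attached to matching the initial seed with the seed $(Q,(X_{P_i[2]})_{i\in Q_0})$, and only afterwards identify it with the shift on every cluster character by induction on mutation distance, reusing the same key observation that $[1]$ preserves exchange triangles. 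What your route buys is independence from the assertion that the exchange relations form a \emph{complete} set of relations for $\mathcal A(Q)$; what it costs is heavier input from the cluster-structure results: you need that $T[1]$ is cluster-tilting and reachable from the initial cluster-tilting object by mutations, that the seed attached to a cluster-tilting object has exchange quiver given by the Gabriel quiver of its endomorphism algebra, and that mutation of cluster-tilting objects matches seed mutation -- all available for acyclic $Q$ from \cite{BMRRT} and \cite{CK2}, but worth citing explicitly rather than treating as immediate. Two small points to tighten: the claim that $u_i\mapsto X_{P_i[2]}$ ``extends uniquely'' to a ring endomorphism deserves a word (either note that the strong isomorphism is already defined on all of $\mathcal A(Q)$ because it sends seeds to seeds, or use the Laurent phenomenon together with the fact that $\mathcal A(Q)$ is a domain), and in the final induction you should say explicitly that $B$ and $B'$ are direct sums of summands of the current cluster-tilting object, so that the inductive hypothesis applies to $X_B$ and $X_{B'}$ and the exchange relation then forces $\sigma(X_{M^*})=X_{M^*[1]}$.
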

		\begin{proof}
			It follows from \cite{CK2} that $\mathcal A(Q)$ is the $\Z$-algebra generated by $X_M$ for $M$ rigid in $\CC_{Q}$ with the relations 
			$$X_MX_{M^*}=X_B+X_B'$$
			where $(M,M^*)$ is an exchange pair and $B,B'$ are the central terms of the corresponding exchange triangles (see \cite{BMRRT} for details). Since $[1]$ is an auto-equivalence of $\CC_{Q}$, $(M[1],M^*[1])$ is also an exchange pair and $B[1],B'[1]$ are the central terms of the corresponding exchange triangles. Thus, 
			$$X_{M[1]}X_{M^*[1]}=X_{B[1]}+X_{B'[1]}$$
			and $X_M \mapsto X_{M[1]}$ induces a $\Z$-linear homomorphism $\mathcal A(Q) \fl \mathcal A(Q)$ whose inverse is induced by $X_M \mapsto X_{M[-1]}$.
		\end{proof}

		From now on and until the end of Section \ref{section:proof}, if $Q$ denotes an euclidean quiver with radical vector $\delta$ then $e \in Q_0$ denotes a point which is either a source or a sink such that $\delta_e=1$.
		\begin{lem}\label{lem:Mqs}
			Let $Q$ be an euclidean quiver. Then for any $\lambda \in \P^1 (\k)$, there exists a unique quasi-simple module $M_{\lambda}$ in $\mathcal T_{\lambda}$ such that $M_{\lambda}(e) \simeq \k$. 
		\end{lem}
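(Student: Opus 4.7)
The plan is to reduce the statement to a purely numerical fact about dimension vectors. The key input is the standard result in the representation theory of tame hereditary algebras that, for any regular component $\mathcal T_\lambda$ of rank $p_\lambda$, if $R_0, R_1, \ldots, R_{p_\lambda - 1}$ denote the quasi-simple modules in $\mathcal T_\lambda$ (ordered so that $\tau R_i \simeq R_{i-1}$), then their dimension vectors satisfy
\[
\sum_{i=0}^{p_\lambda - 1} \ddim R_i = \delta,
\]
where $\delta$ is the positive minimal imaginary root of $Q$. I would invoke this result from a standard reference on tame hereditary algebras (e.g.\ Ringel or \cite{ASS}).

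Once this is in hand, the argument is immediate. Evaluating the above identity at the distinguished vertex $e$ gives
\[
\sum_{i=0}^{p_\lambda - 1} \dim R_i(e) = \delta_e = 1.
\]
Since each term on the left is a non-negative integer, exactly one index $i_0 \in \Z/p_\lambda\Z$ satisfies $\dim R_{i_0}(e) = 1$ and all the others satisfy $\dim R_i(e) = 0$. Setting $M_\lambda := R_{i_0}$ yields a quasi-simple module in $\mathcal T_\lambda$ with $M_\lambda(e) \simeq \k$, and the uniqueness is clear from the uniqueness of $i_0$.

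I do not see any genuine obstacle: the only non-trivial ingredient is the sum-to-$\delta$ formula for quasi-simples in a tube, which is classical. Note also that in the homogeneous case $p_\lambda = 1$ the statement specialises to the single quasi-simple $R_0$ having $\ddim R_0 = \delta$, so $R_0(e) \simeq \k$ automatically, and $M_\lambda = R_0$.
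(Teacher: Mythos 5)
Your proposal is correct and is essentially identical to the paper's proof: both invoke the classical identity $\sum_{i=0}^{p_\lambda-1}\ddim R_i=\delta$ for the quasi-simples of a tube (the paper cites Crawley-Boevey's lectures), evaluate at $e$ where $\delta_e=1$, and conclude that exactly one quasi-simple has one-dimensional support at $e$. No differences worth noting.
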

		\begin{proof}
			Let $R_0, \ldots, R_{p-1}$ be the quasi-simple modules in $\mathcal T$. It is well-known (see for instance \cite{CB:lectures}) that
			$$\delta=\sum_{i=0}^{p-1} \ddim R_i$$
			so that
			$$1 = \delta_e =\sum_{i=0}^{p-1} \dim R_i(e).$$
			Thus, there exists a unique $i \in \ens{0, \ldots, p-1}$ such that $R_i(e) \simeq \k$.
		\end{proof}
		
		\begin{lem}\label{lem:extension1inj}
			Let $Q$ be an euclidean quiver and $\lambda \in \P^1(\k)$. Assume that $e$ is a source such that $\delta_e=1$. Set $N_{\lambda} = M_{\lambda}[1]$. Then the following hold~:
			\begin{enumerate}
				\item $\dim \Ext^1_{\CC_Q}(N_\lambda, S_e) =1$~;
				\item $X_{N_{\lambda}}X_{S_e}=X_{B_\lambda}+X_{B'_\lambda}$ where $B_\lambda$ is the (unique) $\kQ$-module such that there exists a short exact sequence
				$$0 \fl N_{\lambda} \fl B_\lambda \fl S_e \fl 0$$
				and $B'_\lambda=\ker f \oplus \coker f[-1]$ for any non-zero morphism $f \in \Hom_{\CC_{Q}}(N_{\lambda},\tau S_e)$~;
				\item $B_\lambda$ and $B_\lambda'$ are transjective.
			\end{enumerate}
		\end{lem}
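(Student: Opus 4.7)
The plan is to treat the three assertions in order, reducing each to a standard computation. For (1), the shift being an autoequivalence of $\CC_Q$ gives
$$\Ext^1_{\CC_Q}(N_\lambda, S_e) = \Hom_{\CC_Q}(M_\lambda[1], S_e[1]) = \Hom_{\CC_Q}(M_\lambda, S_e).$$
The BMRRT description of morphism spaces in the cluster category expresses this as $\Hom_{\kQ}(M_\lambda, S_e)$ plus a second summand involving a Hom/Ext space between translates of $S_e$ and $M_\lambda$. Since $e$ is a source, $S_e = I_e$ is preinjective; since $M_\lambda$ and all its $\tau$-translates are regular, the standard preinjective-to-regular Hom vanishing in a tame hereditary module category kills the second summand. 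For the first, being a source means no compatibility at $e$ is imposed on a morphism to $S_e$, so $\Hom_{\kQ}(M_\lambda, S_e)$ is identified with the dual of $M_\lambda(e)$, one-dimensional by Lemma \ref{lem:Mqs}.

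For (2), applying the Caldero-Keller cluster multiplication formula (Theorem 2 of \cite{CK2}, already used in the proof of Proposition \ref{prop:Xfrieze}) to the pair $(N_\lambda, S_e)$ with $\dim \Ext^1_{\CC_Q} = 1$ yields $X_{N_\lambda}X_{S_e}=X_{B_\lambda}+X_{B'_\lambda}$ for the two middle terms of the exchange triangles. To identify $B_\lambda$, a computation analogous to (1) shows that $\Ext^1_{\CC_Q}(S_e, N_\lambda) = \Ext^1_{\kQ}(S_e, N_\lambda) \cong \k$ (the second BMRRT summand again vanishing), so the non-split triangle $N_\lambda \fl B_\lambda \fl S_e \fl N_\lambda[1]$ lifts to a genuine short exact sequence of $\kQ$-modules. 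To identify $B'_\lambda$, use $S_e[1] \cong \tau S_e$ in $\CC_Q$ (valid since $S_e$ is non-projective when $e$ is a source) to realise the connecting morphism as a non-zero $f \in \Hom_{\kQ}(N_\lambda, \tau S_e)$; rotating the second triangle in $D^b(\modd \kQ)$ exhibits $B'_\lambda$ as a shift of the mapping cone of $f$, which by the hereditarity of $\kQ$ decomposes as $\ker f \oplus (\coker f)[-1]$.

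For (3), we need both $B_\lambda$ and $B'_\lambda$ to be transjective. For $B'_\lambda$: $\ker f$ is a proper submodule of the quasi-simple regular $N_\lambda$, and by the quasi-simplicity of $N_\lambda$ in the category of regular modules combined with the vanishing of preinjective-to-regular morphisms, $\ker f$ is either $0$ or postprojective, hence transjective; $(\coker f)[-1]$ must be shown transjective by using that $f$ is essentially the ``$e$-component projection'' lifted via $\tau$, together with defect considerations ruling out a regular part. For $B_\lambda$: we argue by contradiction, a regular summand together with the quasi-simplicity of $N_\lambda$, the simplicity of $S_e$, and the preinjective/regular Hom-Ext vanishings would force the short exact sequence to split, contradicting $\Ext^1_{\kQ}(S_e, N_\lambda) \neq 0$. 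The main difficulty is precisely this part (3): (1) and (2) reduce essentially to standard Hom/Ext bookkeeping in the cluster category and a direct application of the cluster multiplication formula, but (3) requires a careful structural analysis using quasi-simplicity, defect, and the Hom/Ext vanishings between the postprojective, regular, and preinjective components of $\modd \kQ$.
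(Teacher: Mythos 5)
Your treatment of parts (1) and (2) is in substance the paper's own proof. For (1) the paper reduces, via the $2$-Calabi--Yau duality and shifts, to $\dim\Hom_{\kQ}(P_e,M_\lambda)=\dim M_\lambda(e)=1$, while you reduce to $\Hom_{\kQ}(M_\lambda,S_e)\simeq D(M_\lambda(e))$; both rest on Lemma \ref{lem:Mqs}. (One cosmetic point: since $S_e=I_e$ is injective, $\tau^{-1}S_e$ is not a module, and the second summand in the decomposition of $\Hom_{\CC_Q}(M_\lambda,S_e)$ is $\Hom_{D^b}(M_\lambda,P_e[2])=0$ by hereditariness rather than by a preinjective-to-regular Hom vanishing; the conclusion is unaffected.) For (2), the appeal to the Caldero--Keller multiplication formula, the identification of $B_\lambda$ through $\Ext^1_{\kQ}(S_e,N_\lambda)\simeq\k$, and the realisation of $B'_\lambda$ as the image in $\CC_Q$ of the cone of a non-zero $f\in\Hom_{\kQ}(N_\lambda,\tau S_e)$ are exactly the steps of the paper.

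The gap is in part (3), for the cokernel summand of $B'_\lambda$. You propose to rule out a regular part of $\coker f$ by ``defect considerations'', but this cannot work: regular modules have defect zero, so the defect of $\coker f$ carries no information about possible regular direct summands; and the phrase ``$f$ is essentially the $e$-component projection lifted via $\tau$'' is not an argument. The paper's reason is a one-liner you missed: $\coker f$ is a quotient of the preinjective module $\tau S_e$, and since $\Hom_{\kQ}$ from preinjective modules to postprojective or regular modules vanishes, every non-zero quotient of a preinjective module is preinjective; hence $\coker f[-1]$ is transjective. (Your kernel argument coincides with the paper's.) Concerning $B_\lambda$, your contradiction sketch only contemplates a regular summand and asserts without proof that its existence ``forces splitting''; a complete argument must also exclude postprojective summands and, for instance, treat separately a summand annihilated by the map to $S_e$ (which by the modular law and the indecomposability of $N_\lambda$ equals $N_\lambda$ and splits the sequence) and a summand surjecting onto $S_e$ (excluded using defect and the Hom-vanishings). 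The paper itself is terse here, asserting preinjectivity of $B_\lambda$ directly, so this second point is a matter of missing detail on both sides; but the defect argument for $\coker f$ is a genuinely flawed step and should be replaced by the quotient-of-preinjective argument.
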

		\begin{proof}
			Note that $I_e \simeq S_e$ so that we have 
			\begin{align*}
				\dim \Ext^1_{\CC_{Q}}(N_{\lambda},S_e)
					&= \dim \Ext^1_{\CC_{Q}}(N_{\lambda},I_e)\\
					&= \dim \Ext^1_{\CC_{Q}}(N_{\lambda},P_e[2])\\
					&= \dim \Ext^1_{\CC_{Q}}(N_{\lambda}[-2],P_e)\\
					&= \dim \Ext^1_{\CC_{Q}}(P_e,N_{\lambda}[-2])\\
					&= \dim \Hom_{\CC_{Q}}(P_e,N_{\lambda}[-1])\\
					&= \dim \Hom_{\kQ}(P_e,\tau^{-1}N_{\lambda})\\
					&= \dim \Hom_{\kQ}(P_e,M_{\lambda})\\
					&= \dim M_{\lambda}(e)\\
					&= 1.\\
			\end{align*}
			Thus, according to \cite{CK2}, 
			$$X_{N_{\lambda}}X_{S_e}=X_{B_\lambda}+X_{B'_\lambda}$$
			where $B_\lambda,B'_\lambda$ are the (unique) objects such that there exist non-split triangles 
			$$N_{\lambda} \fl B_\lambda \fl S_e \fl N_{\lambda}[1] \textrm{ and } S_e \fl B'_\lambda \fl N_{\lambda} \fl S_e[1]$$
			in $\CC_{Q}$. Since $\Ext^1_{\kQ}(S_e,N_{\lambda}) \simeq \k$, there exists a unique $\kQ$-module $M_{\lambda}$ such that there is a non-split short exact sequence 
			$$0 \fl N_{\lambda} \fl B_\lambda \fl S_e \fl 0$$
			inducing a non-split triangle $N_{\lambda} \fl B_\lambda \fl S_e \fl N_{\lambda}[1]$ in $\CC_{Q}$. Since $B_\lambda$ is the middle term of a non-split exact sequence starting at a quasi-simple regular $\kQ$-module and ending at a preinjective (and actually injective) module, it follows that $B_\lambda$ is a preinjective $\kQ$-module so that it is identified with a transjective object in $\CC_Q$.

			Also, $\Hom_{\CC_{Q}}(N_{\lambda},\tau S_e) \simeq \Hom_{\kQ}(N_{\lambda},\tau S_e) \simeq \k$ so that a non-zero morphism $f \in \Hom_{\kQ}(N_{\lambda},\tau S_e)$ induces a triangle
			$$ \tau S_e[-1] \fl \ker f \oplus \coker f[-1] \fl N_{\lambda} \xrightarrow{f} \tau S_e$$
			in $D^b(\modd \kQ)$. Since the projection functor $D^b(\modd \kQ) \fl \CC_{Q}$ is triangulated, this gives a non-split triangle 
			$$S_e \fl B'_\lambda \fl N_{\lambda} \fl S_e[1]$$
			in $\CC_{Q}$. Since $N_\lambda$ is quasi-simple and $f$ is non-zero, then $\ker f$ is a proper submodule of $N_\lambda$ so that it is a postprojective $\kQ$-module or zero. Also, $\coker f$ is a quotient of the injective module $\tau S_e$ so that it is a preinjective $\kQ$-module or zero. Thus $\coker f[-1]$ is a transjective object in $\CC_Q$ and $B_\lambda'$ is transjective.
		\end{proof}

		\begin{lem}\label{lem:extension1proj}
			Let $Q$ be an euclidean quiver and $\lambda \in \P^1(\k)$. Assume that $e$ is a sink such that $\delta_e=1$. Set $N_{\lambda}=M_{\lambda}[-1]$. Then the following hold~:
			\begin{enumerate}
				\item $\dim \Ext^1_{\CC_{Q}}(N_{\lambda},S_e)=1$~;
				\item $X_{N_{\lambda}}X_{P_e}=X_{B_{\lambda}}+X_{B'_{\lambda}}$ where $B_{\lambda}$ is the (unique) $\kQ$-module such that there exists a short exact sequence
				$$0 \fl S_e \fl B_\lambda \fl N_{\lambda} \fl 0$$
				and $B'_{\lambda}=\ker f \oplus \coker f[-1]$ for any non-zero morphism $f \in \Hom_{\CC_{Q}}(S_e,\tau N_{\lambda})$~;
				\item $B_\lambda$ and $B_\lambda'$ are transjective.
			\end{enumerate}
		\end{lem}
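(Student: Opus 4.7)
The plan is to mirror the proof of Lemma~\ref{lem:extension1inj}, dualising every step since $e$ is now a sink rather than a source. The key observation is that a sink $e$ with $\delta_e=1$ satisfies $P_e\simeq S_e$ (the simple at $e$ coincides with the indecomposable projective), and that $N_\lambda=M_\lambda[-1]$ satisfies $N_\lambda[1]=M_\lambda$ in $\CC_Q$ via the identification $[1]=\tau$.

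For (1), I would chain the $2$-Calabi--Yau duality with these identifications to obtain
\begin{align*}
\dim \Ext^1_{\CC_Q}(N_\lambda, S_e)
&= \dim \Ext^1_{\CC_Q}(N_\lambda, P_e) \\
&= \dim \Hom_{\CC_Q}(P_e, N_\lambda[1]) \\
&= \dim \Hom_{\kQ}(P_e, M_\lambda) = \dim M_\lambda(e) = 1,
\end{align*}
the last equality being Lemma~\ref{lem:Mqs}. Then \cite[Theorem~2]{CK2} delivers the exchange formula in (2). For the module-theoretic description of $B_\lambda$: the Auslander--Reiten formula gives $\dim\Ext^1_{\kQ}(N_\lambda,S_e)=1$, so the triangle $S_e\fl B_\lambda\fl N_\lambda\fl S_e[1]$ lifts to a unique non-split short exact sequence $0\fl S_e\fl B_\lambda\fl N_\lambda\fl 0$ in $\modd \kQ$. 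For $B'_\lambda$: a non-zero morphism $f\in\Hom_{\CC_Q}(S_e,\tau N_\lambda)\simeq\Hom_{\kQ}(S_e,M_\lambda)$ has cone $\ker f[1]\oplus\coker f$ in $D^b(\modd \kQ)$ since $\kQ$ is hereditary, and rotating this triangle and projecting to $\CC_Q$ yields $B'_\lambda\simeq\ker f\oplus\coker f[-1]$.

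For (3), I would obtain that $B_\lambda$ is postprojective by applying the standard duality $D=\Hom_\k(-,\k):\modd \kQ\fl \modd \kQ^{\op}$, under which the sink $e$ becomes a source; the source-case of Lemma~\ref{lem:extension1inj} applied to $\kQ^{\op}$ then shows that $DB_\lambda$ is preinjective, so $B_\lambda$ is postprojective. For $B'_\lambda$, since $S_e$ is simple, the nonzero $f$ is injective, hence $\ker f=0$, and $\coker f=M_\lambda/S_e$ is a proper nonzero quotient of the quasi-simple regular module $M_\lambda$; quasi-simplicity excludes regular summands while the vanishing of morphisms from regular to postprojective modules excludes postprojective summands, so $\coker f$ is preinjective. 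Hence $\coker f[-1]$ lies in the transjective component of $\CC_Q$, making $B'_\lambda$ transjective.

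The main delicate step is the transjectivity of $B_\lambda$: one must check that the duality $D$ preserves the relevant data (notably the radical vector $\delta$ and the distinguished quasi-simple module $M_\lambda$) so that the source case genuinely applies; alternatively one can give a direct argument ruling out regular and preinjective summands of $B_\lambda$ using the standard Hom-vanishings between the three classes of indecomposable $\kQ$-modules.
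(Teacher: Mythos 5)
Your proposal is correct and follows essentially the same route as the paper: the dimension count for $\Ext^1_{\CC_Q}(N_\lambda,S_e)$ using $S_e\simeq P_e$ and $\dim M_\lambda(e)=1$, the exchange formula from \cite{CK2}, and the identification of $B_\lambda$ and $B'_\lambda$ via the short exact sequence and the cone of $f$. The paper simply says ``the rest follows as for Lemma \ref{lem:extension1inj}''; your dualisation of that argument (including the direct exclusion of regular and postprojective summands of $\coker f$, and of non-postprojective summands of $B_\lambda$) is exactly the intended completion, just spelled out in more detail.
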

		\begin{proof}
			Since $e$ is a sink, we have $S_e \simeq P_e$ so that 
			\begin{align*}
				\dim \Ext^1_{\CC_{Q}}(N_{\lambda},S_e)
					&= \dim \Ext^1_{\CC_{Q}}(N_{\lambda},P_e)\\
					&= \dim \Ext^1_{\kQ}(N_{\lambda},P_e)\\
					&= \dim \Hom_{\kQ}(P_e,\tau N_{\lambda})\\
					&= \dim \Hom_{\kQ}(P_e,M_{\lambda})\\
					&= \dim M_{\lambda}(e)\\
					&= 1
			\end{align*}
			The rest follows as for Lemma \ref{lem:extension1inj}.
		\end{proof}

	\subsection{Proof of Theorem \ref{theorem:main}}\label{ssection:mainproof}
		If order to prove Theorem \ref{theorem:main}, we need to prove that if $f$ is a cluster frieze on $\Gamma(\CC_Q)$ such that $f(P_i[1])=u_i$ for any $i \in Q_0$, then $f(M)=X_M$ for any indecomposable rigid object $M$ in $\CC_Q$. It is known that an indecomposable rigid object $M$ is either transjective or belongs to an exceptional tube.

		If $M$ is transjective, the result follows from Corollary \ref{corol:transjective}. We thus assume that $M$ belongs to a tube $\mathcal T_\lambda$ for some $\lambda \in \P^{\mathcal E}$. According to Corollary \ref{corol:tubes}, since $X_?$ and $f$ induce friezes on $\mathcal T_\lambda$, it is enough to prove that $X_?$ and $f$ coincide on the quasi-simple modules at the mouth of $\mathcal T_\lambda$. In other words, it is enough to prove that $X_{N_\lambda[k]}=f(N_\lambda[k])$ for any $k \in \ens{0, \ldots, p_\lambda-1}$. According to Lemmas \ref{lem:extension1inj} and \ref{lem:extension1proj}, we have
		$$X_{N_\lambda}=\frac{X_{B_\lambda}+X_{B_\lambda'}}{X_{S_e}}$$
		so that applying Lemma \ref{lem:shiftautomorphism}, we get 
		$$X_{N_\lambda[k]}=\frac{X_{B_\lambda[k]}+X_{B_\lambda'[k]}}{X_{S_e[k]}}$$
		for any $k \in \ens{0, \ldots, p_\lambda-1}$.

		Since $S_e[k], B_\lambda[k]$ and $B_\lambda'[k]$ are transjective, we have $X_{S_e[k]}=f(S_e[k])$, $X_{B_\lambda[k]}=f(B_\lambda[k])$ and $X_{B_\lambda'[k]}=f(B_\lambda'[k])$ for any $k \in \ens{0, \ldots, p_\lambda-1}$. Thus, since $f$ is a cluster frieze, then for any $k \in \ens{0, \ldots, p_\lambda-1}$, we get
		\begin{align*}
			f(N_{\lambda}[k])
				&=\frac{f(B_\lambda[k])+f(B_\lambda'[k])}{f(S_e[k])}\\
				&=\frac{X_{B_\lambda[k]}+X_{B_\lambda'[k]}}{X_{S_e[k]}}\\
				&=X_{N_{\lambda}[k]}
		\end{align*}
		and Theorem \ref{theorem:main} is proved. \hfill \qed

	\subsection{Proof of Theorem \ref{theorem:strong}}\label{ssection:strongroof}
		If order to prove Theorem \ref{theorem:main}, we need to prove that if $f$ is a strong cluster frieze on $\Gamma(\CC_Q)$ such that $f(P_i[1])=u_i$ for any $i \in Q_0$, then $f(M)=X_M$ for any indecomposable object $M$ in $\CC_Q$. 

		Since $f$ is a strong cluster frieze, it is in particular a cluster frieze and it thus follows that $f(M)=X_M$ for any object $M$ in the transjective component or in an exceptional tube of $\Gamma(\CC_Q)$. It is thus sufficient to prove that $f$ and $X_?$ coincide on homogeneous tubes. According to Corollary \ref{corol:tubes}, since $X_?$ and $f$ induce friezes on tubes, it is enough to prove that $X_?$ and $f$ coincide on the mouths of homogeneous tubes. Fix thus $\lambda \in \P^{\mathcal H}$. It follows from Lemmas \ref{lem:extension1inj} and \ref{lem:extension1proj} that
		$$X_{N_\lambda}=\frac{X_{B_\lambda}+X_{B_\lambda'}}{X_{S_e}}.$$
		As before, since $S_e, B_\lambda$ and $B_\lambda'$ are transjective, we have $X_{S_e}=f(S_e)$, $X_{B_\lambda}=f(B_\lambda)$ and $X_{B_\lambda'}=f(B_\lambda')$. Moreover, since $f$ is a strong cluster frieze, we have 
		\begin{align*}
			f(N_{\lambda})
				&=\frac{f(B_\lambda)+f(B_\lambda')}{f(S_e)}\\
				&=\frac{X_{B_\lambda}+X_{B_\lambda'}}{X_{S_e}}\\
				&=X_{N_{\lambda}}
		\end{align*}
		which proves Theorem \ref{theorem:strong}. \hfill \qed

\section{Euler characteristics of complete quiver grassmannians}\label{section:GrM}
	Given an acyclic quiver $Q$, we construct $\Z$-friezes on $\Gamma(\CC_Q)$ and see that they allow to compute algorithmically the Euler characteristics of the following projective varieties.

	\begin{defi}
		Let $Q$ be an acyclic quiver and $M$ be a $\kQ$-module. The \emph{complete quiver grassmannian} is 
		$$\Gr(M)=\bigsqcup_{\textbf e \in \Z_{\geq 0}^{Q_0}}\Gr_{\textbf e}(M).$$
	\end{defi}
	Since $\Gr_{\textbf e}(M)$ is empty for all but finitely many $\textbf e \in \Z_{\geq 0}^{Q_0}$, then $\Gr(M)$ is a finite union of projective varieties and is thus a projective variety. We denote by $\chi(\Gr(M))$ its Euler characteristic.
	
	Let $\ev_1$ be the ring homomorphism
	$$\ev_1: \left\{\begin{array}{rcll}
		\Z[\textbf u^{\pm 1}] & \fl & \Z, \\
		u_i & \mapsto & 1 & \textrm{ for any }i \in Q_0.
	\end{array}\right.$$

	\begin{lem}\label{lem:GrMev}
		Let $Q$ be an acyclic quiver, then for any $\kQ$-module $M$, 
		$$\ev_1(X_M)=\chi(\Gr(M)).$$
	\end{lem}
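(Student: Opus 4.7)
The proof is essentially a direct unwinding of definitions, so my plan is short. The idea is that applying $\ev_1$ to the formula (\ref{eq:XM}) defining $X_M$ collapses every Laurent monomial in the $u_i$'s to $1$, leaving only the sum of Euler characteristics $\sum_{\textbf{e}} \chi(\Gr_{\textbf{e}}(M))$, and then additivity of Euler characteristic on a disjoint union of locally closed pieces identifies this sum with $\chi(\Gr(M))$.

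More precisely, first I would write
\[
\ev_1(X_M) \;=\; \sum_{\textbf{e} \in \Z_{\geq 0}^{Q_0}} \chi(\Gr_{\textbf{e}}(M)) \, \ev_1\!\left( \prod_{i \in Q_0} u_i^{-\<\textbf{e},S_i\>-\<S_i,\ddim M - \textbf{e}\>} \right),
\]
using that $\ev_1$ is a ring homomorphism and that only finitely many $\textbf{e}$ contribute (since $\Gr_{\textbf{e}}(M)$ is empty whenever $\textbf{e} \not\leq \ddim M$ coordinatewise). Each factor $u_i$ evaluates to $1$, so every monomial evaluates to $1$, leaving $\ev_1(X_M) = \sum_{\textbf{e}} \chi(\Gr_{\textbf{e}}(M))$.

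Second, from the definition $\Gr(M) = \bigsqcup_{\textbf{e}} \Gr_{\textbf{e}}(M)$, this is a disjoint union of (finitely many non-empty) closed subvarieties of $\Gr(M)$, which are open in $\Gr(M)$ as well because the dimension vector function is locally constant on $\Gr(M)$ (the components $\Gr_{\textbf{e}}(M)$ form a clopen stratification indexed by a discrete invariant). Additivity of compactly supported Euler characteristic on such a finite disjoint decomposition gives $\chi(\Gr(M)) = \sum_{\textbf{e}} \chi(\Gr_{\textbf{e}}(M))$, and combining with the previous display yields $\ev_1(X_M) = \chi(\Gr(M))$.

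There is no real obstacle; the only subtle point worth mentioning is the additivity of $\chi$ along the stratification by dimension vectors, which one justifies exactly as in the paragraph above. Everything else is formal substitution.
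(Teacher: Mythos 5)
Your proposal is correct and follows essentially the same route as the paper: apply the ring homomorphism $\ev_1$ termwise to the formula defining $X_M$ (which, as the paper notes, is valid for arbitrary, not necessarily indecomposable, modules), so each Laurent monomial evaluates to $1$, and then use additivity of the Euler characteristic over the finite disjoint union $\Gr(M)=\bigsqcup_{\textbf e}\Gr_{\textbf e}(M)$. Your extra remark justifying additivity via the clopen stratification by dimension vectors is a harmless elaboration of a step the paper leaves implicit.
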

	\begin{proof}
		Let $M$ be a $\kQ$-module. Then
		\begin{align*}
			\ev_1(X_M) 
				&= \sum_{\textbf e \in \Z_{\geq 0}^{Q_0}} \chi(\Gr_{\textbf e}(M)) \ev_1(u_i^{-\<\textbf e,S_i\>-\<S_i, \ddim M- \textbf e\>})\\
				&= \sum_{\textbf e \in \Z_{\geq 0}^{Q_0}} \chi(\Gr_{\textbf e}(M))\\
				&= \chi(\bigsqcup_{\textbf e \in \Z_{\geq 0}^{Q_0}} \Gr_{\textbf e}(M)) \\
				&= \chi(\Gr(M)).
		\end{align*}
	\end{proof}

	\begin{corol}\label{corol:GrMtransjective}
		Let $Q$ be an acyclic quiver and $f$ be a $\Z$-frieze on $\Gamma(\CC_Q)$ such that $f(P_i[1])=1$ for any $i \in Q_0$. Then $f(M)=\chi(\Gr(M))$ for any transjective object $M$. 

		In particular, $\chi(\Gr(M))>0$ for any postprojective or preinjective $\kQ$-module $M$.
	\end{corol}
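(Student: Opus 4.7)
The plan is to deduce the equality $f(M)=\chi(\Gr(M))$ from the uniqueness of friezes on the transjective component (Lemma \ref{lem:asection}), by comparing $f$ with a second $\Z$-frieze obtained from the cluster character $X_?$ by specialising the indeterminates $u_i$ to $1$.

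First I would observe that $\ev_1$ is a ring homomorphism, hence post-composing any $\Z[\textbf u^{\pm 1}]$-frieze on $\Gamma(\CC_Q)$ with $\ev_1$ yields a $\Z$-frieze on the same quiver, the mesh relation $ab = c+1$ being preserved term by term. Applied to the frieze furnished by Proposition \ref{prop:Xfrieze}, the map $g := \ev_1 \circ X_?$ is thus a $\Z$-frieze on $\Gamma(\CC_Q)$. By construction, $g(P_i[1]) = \ev_1(u_i) = 1 = f(P_i[1])$ for every $i \in Q_0$, so $f$ and $g$ coincide on the section $\Sigma = \ens{P_i[1] | i \in Q_0}$ of the transjective component $\mathcal P \simeq \Z Q$. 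Lemma \ref{lem:asection} then forces $f = g$ on all of $\mathcal P$, and combining this with Lemma \ref{lem:GrMev} gives $f(M) = g(M) = \ev_1(X_M) = \chi(\Gr(M))$ for every transjective $\kQ$-module $M$.

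For the ``in particular'' clause I would unwind the sum $\chi(\Gr(M)) = \sum_{\textbf e} \chi(\Gr_{\textbf e}(M))$: the summand at $\textbf e = 0$ contributes $1$ since the zero submodule is an isolated point, and the remaining summands are nonnegative whenever $M$ is postprojective or preinjective, so $\chi(\Gr(M)) \geq 1 > 0$. The frieze-theoretic portion of the argument is routine; the only substantive obstacle is this nonnegativity, which lies outside the frieze machinery developed here and relies on a cell decomposition of the quiver grassmannians of transjective indecomposable modules by affine spaces, equivalently on the positivity theorem for cluster variables in acyclic cluster algebras.
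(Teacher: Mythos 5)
Your main argument is exactly the paper's: compose the frieze of Proposition \ref{prop:Xfrieze} with the ring homomorphism $\ev_1$ to get the $\Z$-frieze $\ev_1 \circ X_?$, note that it agrees with $f$ on the section $\ens{P_i[1] \mid i \in Q_0}$, apply Lemma \ref{lem:asection} to conclude $f=\ev_1\circ X_?$ on the transjective component, and finish with Lemma \ref{lem:GrMev}. Where you diverge is the ``in particular'' clause, and there your route is heavier than necessary. The paper gets positivity for free from the frieze itself: on the transjective component one argues by the same induction as in Lemma \ref{lem:asection}, starting from the value $1$ on the section and using the mesh relation
$$f(n,i)=\frac{\prod_{(m,j)\fl (n,i)} f(m,j)+1}{f(n-1,i)},$$
whose numerator is positive once the earlier values are, and whose denominator is positive by induction; hence every frieze value on the transjective component is a positive integer, and $\chi(\Gr(M))=f(M)>0$ follows with no geometric input (decomposable postprojective or preinjective modules are then handled by multiplicativity). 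Your argument instead needs $\chi(\Gr_{\textbf e}(M))\geq 0$ for all $\textbf e$ when $M$ is postprojective or preinjective, which you justify by cell decompositions of these quiver grassmannians; that is a genuine and nontrivial external theorem, not part of the machinery developed here, and your parenthetical ``equivalently on the positivity theorem for cluster variables'' overstates the logical relation (nonnegativity of the Euler characteristics is what you need, and positivity of cluster variables is neither used in the paper nor formally equivalent to it). So your proof is correct modulo that citation, but the intended and preferable argument stays entirely inside the frieze: positivity is an easy induction on the transjective component, which is one of the payoffs of proving the identity $f=\ev_1\circ X_?$ frieze-theoretically in the first place.
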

	\begin{proof}
		It follows from Proposition \ref{prop:Xfrieze} that $X_?$ induces a frieze on $\Gamma(\CC_Q)$. Since $\ev_1:\Z[\textbf u^{\pm 1}] \fl \Z$ is a ring homomorphism, it follows that $\ev_1 \circ X_?$ is a $\Z$-frieze on $\Gamma(\CC_Q)$ such that $(\ev_1 \circ X_?)(P_i[1])=1$ for any $i \in Q_0$. Thus, $f$ and $\ev_1 \circ X_?$ coincide on a section in the transjective component $\mathcal P$. By Lemma \ref{lem:asection} the friezes coincide on $\mathcal P$. It thus follows from Lemma \ref{lem:GrMev} that $f(M)=\chi(\Gr(M))$ for any transjective object $M$. An easy induction on the transjective component proves that $f(M) >0$ for any transjective object $M$.
	\end{proof}

	\begin{corol}\label{corol:GrMrigid}
		Let $\CC_{Q}$ be the cluster category of an euclidean quiver $Q$ and let $f$ be a cluster $\Z$-frieze on $\Gamma(\CC_{Q})$ such that $f(P_i[1])=1$ for any $i \in Q_0$. Then $f(M)=\chi(\Gr(M))$ for any rigid $\kQ$-module $M$.
	\end{corol}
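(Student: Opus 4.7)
The strategy is to descend the proof of Theorem \ref{theorem:main} to $\Z$-coefficients via the evaluation $\ev_1$. Define $g = \ev_1 \circ X_?$. Since $\ev_1 : \Z[\textbf u^{\pm 1}] \fl \Z$ is a ring homomorphism and $X_?$ induces a $\Z[\textbf u^{\pm 1}]$-frieze on $\Gamma(\CC_Q)$ by Proposition \ref{prop:Xfrieze}, the map $g$ is a $\Z$-frieze on $\Gamma(\CC_Q)$, and by construction $g(P_i[1])=\ev_1(u_i)=1$ for every $i \in Q_0$. Moreover, $g$ is a cluster frieze: Lemmas \ref{lem:extension1inj} and \ref{lem:extension1proj} give $X_{N_\lambda} X_{S_e} = X_{B_\lambda}+X_{B'_\lambda}$, so the shift automorphism of Lemma \ref{lem:shiftautomorphism} yields $X_{N_\lambda[k]} X_{S_e[k]} = X_{B_\lambda[k]}+X_{B'_\lambda[k]}$ for every $k$, and applying the ring homomorphism $\ev_1$ gives the defining relation (\ref{eq:clusterfrieze}) for $g$ at every $\lambda \in \P^{\mathcal E}$.

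Now both $f$ and $g$ are cluster $\Z$-friezes on $\Gamma(\CC_Q)$ that take the value $1$ at each $P_i[1]$. By Corollary \ref{corol:GrMtransjective}, they both agree with $\chi(\Gr(\cdot))$ on the transjective component, and in particular $f(M) = g(M) > 0$ for every postprojective or preinjective module $M$. In particular, for any $\lambda \in \P^{\mathcal E}$ and any $k \in \ens{0,\ldots, p_\lambda-1}$, the objects $S_e[k]$, $B_\lambda[k]$ and $B'_\lambda[k]$ are transjective (by Lemmas \ref{lem:extension1inj}, \ref{lem:extension1proj}), $f$ and $g$ take equal, nonzero values on them, and the cluster frieze relation (\ref{eq:clusterfrieze}) forces
$$f(N_\lambda[k]) = \frac{f(B_\lambda[k])+f(B'_\lambda[k])}{f(S_e[k])} = \frac{g(B_\lambda[k])+g(B'_\lambda[k])}{g(S_e[k])} = g(N_\lambda[k]).$$
Thus $f$ and $g$ agree on the mouth of every exceptional tube, and Corollary \ref{corol:tubes} extends this agreement to the whole of each exceptional tube.

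It remains to invoke the classification of rigid $\kQ$-modules: every rigid module is either transjective (postprojective or preinjective) or regular in an exceptional tube, since quasi-simples in homogeneous tubes have non-vanishing self-extensions. In either case $f(M)=g(M)$, and Lemma \ref{lem:GrMev} identifies $g(M)=\ev_1(X_M)$ with $\chi(\Gr(M))$, which completes the proof. There is no real technical obstacle here beyond checking that $\ev_1 \circ X_?$ satisfies (\ref{eq:clusterfrieze}); everything else is a formal combination of the transjective case (Corollary \ref{corol:GrMtransjective}), the tube rigidity dictionary, and the uniqueness statement for friezes on tubes (Corollary \ref{corol:tubes}).
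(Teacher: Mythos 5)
Your proof is correct and follows essentially the same route as the paper, which compares $f$ with $\ev_1 \circ X_?$ and invokes the argument of Theorem \ref{theorem:main} to get agreement on the transjective component and the exceptional tubes before concluding with Lemma \ref{lem:GrMev}. You merely spell out what the paper leaves as ``easy to check'' (that $\ev_1 \circ X_?$ satisfies the relation (\ref{eq:clusterfrieze}) via Lemmas \ref{lem:extension1inj}, \ref{lem:extension1proj} and \ref{lem:shiftautomorphism}, plus the nonvanishing of the frieze on transjective objects needed to divide in $\Z$), so there is no substantive difference.
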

	\begin{proof}
		As above, $f$ and $M \mapsto \chi(\Gr(M))$ coincide on the transjective component. Now, as for Theorem \ref{theorem:main}, it is easy to check that if $f$ is a $\Z$-frieze on $\Gamma(\CC_Q)$ such that $f(P_i[1])=1$ for any $i \in Q_0$, then $f$ coincides with $\ev_1 \circ X_?$ on every object contained either in the transjective component or in an exceptional tube. In particular, $f(M)=\ev_1(X_M)=\chi(\Gr(M))$ for any rigid $\kQ$-module $M$. 
	\end{proof}

	Similarly, we obtain~:
	\begin{corol}\label{corol:GrM}
		Let $\CC_{Q}$ be the cluster category of an euclidean quiver $Q$. Let $f$ be a strong cluster $\Z$-frieze on $\Gamma(\CC_{Q})$ such that $f(P_i[1])=1$ for any $i \in Q_0$. Then $f(M)=\chi(\Gr(M))$ for any $\kQ$-module $M$. \hfill \qed
	\end{corol}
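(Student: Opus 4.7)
The plan is to mimic the proof of Theorem \ref{theorem:strong} with coefficients specialised via $\ev_1$, exploiting the fact that applying a ring homomorphism to a frieze yields a frieze.

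First I would introduce $g := \ev_1 \circ X_?$ and verify that $g$ is a strong cluster $\Z$-frieze on $\Gamma(\CC_Q)$ with $g(P_i[1])=1$ for every $i \in Q_0$. The frieze property follows from Proposition \ref{prop:Xfrieze} because $\ev_1 : \Z[\textbf u^{\pm 1}] \fl \Z$ is a ring homomorphism; the initial condition $g(P_i[1]) = \ev_1(u_i) = 1$ is immediate. For the strong cluster condition, I would apply $\ev_1$ to the identities of Lemmas \ref{lem:extension1inj} and \ref{lem:extension1proj}, shifted via Lemma \ref{lem:shiftautomorphism}: for every $\lambda \in \P^1(\k)$ and every $k$,
$$X_{N_\lambda[k]} X_{S_e[k]} = X_{B_\lambda[k]} + X_{B'_\lambda[k]},$$
and one needs only observe that $g(S_e[k]) > 0$ by Corollary \ref{corol:GrMtransjective} (since $S_e[k]$ is transjective), so that division is legitimate and yields the defining relation (\ref{eq:clusterfrieze}) for $g$.

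Next I would show $f = g$ by running the same three-step argument used in Section \ref{ssection:strongroof}. Step 1: on the transjective component, both $f$ and $g$ are $\Z$-friezes agreeing with the value $1$ on the section $\{P_i[1] \mid i \in Q_0\}$, so by Lemma \ref{lem:asection} they coincide there. Step 2: on any exceptional tube $\mathcal T_\lambda$ with $\lambda \in \P^{\mathcal E}$, the cluster frieze relations together with Step~1 determine $f(N_\lambda[k])$ and $g(N_\lambda[k])$ for $0 \le k \le p_\lambda - 1$ in terms of transjective values only (because $B_\lambda[k], B'_\lambda[k], S_e[k]$ are transjective by Lemmas \ref{lem:extension1inj} and \ref{lem:extension1proj}), and Corollary \ref{corol:tubes} propagates equality to all of $\mathcal T_\lambda$. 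Step 3 is the new input made possible by the hypothesis that $f$ is a strong cluster frieze: for $\lambda \in \P^{\mathcal H}$ we have $p_\lambda = 1$, so applying (\ref{eq:clusterfrieze}) once gives
$$f(N_\lambda) = \frac{f(B_\lambda) + f(B'_\lambda)}{f(S_e)} = \frac{g(B_\lambda) + g(B'_\lambda)}{g(S_e)} = g(N_\lambda),$$
again using that $B_\lambda, B'_\lambda, S_e$ are transjective. Since the homogeneous tube has a single quasi-simple at its mouth, Corollary \ref{corol:tubes} then forces $f = g$ throughout $\mathcal T_\lambda$.

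Having established $f = g = \ev_1 \circ X_?$ on every connected component of $\Gamma(\CC_Q)$, the conclusion $f(M) = \chi(\Gr(M))$ for every $\kQ$-module $M$ follows directly from Lemma \ref{lem:GrMev}. There is essentially no serious obstacle here: the heart of the work was already done in the proof of Theorem \ref{theorem:strong}, and the only genuine point to check is the verification that $g$ satisfies the strong cluster frieze relation as an equality in $\Z$ (and not merely in $\Q$), which is automatic from the $\kQ$-module relation $X_{N_\lambda} X_{S_e} = X_{B_\lambda} + X_{B'_\lambda}$ upon specialisation.
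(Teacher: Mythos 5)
Your proposal is correct and follows essentially the route the paper intends: the paper disposes of this corollary with a "Similarly" pointing back to the arguments for Corollary \ref{corol:GrMrigid} and Theorem \ref{theorem:strong}, i.e.\ one checks that $\ev_1 \circ X_?$ is a strong cluster $\Z$-frieze with value $1$ on the $P_i[1]$, compares it with $f$ component by component (section of the transjective part via Lemma \ref{lem:asection}, mouths of tubes via Corollary \ref{corol:tubes} and the relation (\ref{eq:clusterfrieze})), and concludes with Lemma \ref{lem:GrMev}. Your extra care about the relation holding in $\Z$ (using $g(S_e[k])>0$ from Corollary \ref{corol:GrMtransjective}) is exactly the right point to flag and is handled correctly.
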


\section{Explicit determination of $B_{\lambda}$ and $B_{\lambda}'$}\label{section:explicit}
	Let $Q$ be an euclidean quiver. Once defined, the cluster frieze provides an efficient tool for computations in the cluster algebra $\mathcal A(Q)$. Nevertheless, the definition of a cluster frieze on $\Gamma(\CC_Q)$ requires the determination of the pair $(B_\lambda,B_\lambda')$ of objects arising as middle terms of triangles with extreme terms $N_\lambda$ and $S_e$. This determination is the only obstruction for getting a complete algorithm for computing cluster variables. The aim of this section is to explicit these objects for arbitrary euclidean quivers. This is based on a case-by-case analysis.

	\subsection{Homogeneous tubes}
		We first prove that for any euclidean quiver $Q$ and any parameter $\lambda \in \P^{\mathcal H}$ of an homogeneous tube, the transjective objects $B_\lambda$ and $B_\lambda'$ do not depend on the choice of $\lambda$.
		\begin{lem}\label{lem:BHproj}
			Let $Q$ be an euclidean quiver. Assume that $e$ is a sink. Then, for any $\lambda \in \P^{\mathcal H}$, the following hold~:
			\begin{enumerate}
				\item $B_\lambda$ is the unique (up to isomorphism) indecomposable module of dimension vector $\delta + \ddim S_e$~;
				\item $B_\lambda' \simeq C[-1]$ where $C$ is the unique (up to isomorphism) indecomposable module of dimension vector $\delta - \ddim S_e$.
			\end{enumerate}
		\end{lem}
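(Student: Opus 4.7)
The plan is to exploit the fact that a homogeneous tube $\mathcal T_\lambda$ is particularly simple: the quasi-simple module $M_\lambda$ is $\tau$-invariant with $\ddim M_\lambda = \delta$, and via the 2-Calabi-Yau isomorphism $[1] \simeq \tau$ in $\CC_Q$, the object $N_\lambda = M_\lambda[-1] \simeq \tau^{-1} M_\lambda$ is identified with the $\kQ$-module $M_\lambda$ itself; in particular $\tau N_\lambda \simeq M_\lambda$. Since $e$ is a sink, $S_e = P_e$, so $\Hom_{\kQ}(S_e, M_\lambda) \simeq M_\lambda(e) \simeq \k$, and Lemma \ref{lem:extension1proj} already gives $\dim \Ext^1_{\kQ}(M_\lambda, S_e) = 1$. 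These two one-dimensional spaces entirely control the triangles defining $B_\lambda$ and $B_\lambda'$.

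For part (1), the non-split short exact sequence $0 \fl S_e \fl B_\lambda \fl M_\lambda \fl 0$ produced by Lemma \ref{lem:extension1proj} immediately yields $\ddim B_\lambda = \delta + \ddim S_e$. To prove indecomposability, I would suppose $B_\lambda \simeq X \oplus Y$ with $Y \ne 0$, observe that the simple submodule $S_e \subset B_\lambda$ lies entirely in one summand (say $X$) and pass to the quotient: the sequence then yields $M_\lambda \simeq (X/S_e) \oplus Y$, forcing one summand to vanish since $M_\lambda$ is indecomposable, which in turn forces the original sequence to split, a contradiction. Uniqueness is then a root-theoretic step: since $\delta$ generates the radical of the symmetrised Euler form, one computes $\<\delta + \ddim S_e, \delta + \ddim S_e\> = \<\ddim S_e, \ddim S_e\> = 1$, so $\delta + \ddim S_e$ is a positive real root. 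In euclidean type, each positive real root is realised by a unique (up to isomorphism) indecomposable $\kQ$-module, which is moreover transjective; hence $B_\lambda$ is determined by its dimension vector, independently of $\lambda$.

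For part (2), Lemma \ref{lem:extension1proj} gives $B_\lambda' = \ker f \oplus \coker f[-1]$ for any non-zero $f \in \Hom_{\CC_Q}(S_e, \tau N_\lambda)$; using $\tau N_\lambda \simeq M_\lambda$, such an $f$ is a non-zero morphism $S_e \to M_\lambda$ in $\modd \kQ$. Since $S_e$ is simple and $f$ is non-zero, $\ker f = 0$ and $C := \coker f \simeq M_\lambda/S_e$ has dimension vector $\delta - \ddim S_e$, so $B_\lambda' \simeq C[-1]$. The indecomposability of $C$ follows by a dual argument: a decomposition $C \simeq C_1 \oplus C_2$ would, via the short exact sequence $0 \to S_e \to M_\lambda \to C \to 0$ and passage to the preimages of the $C_i$ containing $S_e$, produce a non-trivial decomposition of $M_\lambda$, contradicting its indecomposability. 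The same computation $\<\delta - \ddim S_e, \delta - \ddim S_e\> = 1$ shows that $\delta - \ddim S_e$ is a positive real root, so $C$ is the unique indecomposable with this dimension vector, independent of $\lambda$.

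The technical point where care is needed is the indecomposability of $B_\lambda$ and $C$: morally clear, but requires checking that a hypothetical direct-sum decomposition of the middle (resp.~quotient) term really forces a matching decomposition of $M_\lambda$. Once this is in place, the root-theoretic uniqueness of an indecomposable with a given positive real root dimension vector in euclidean type finishes the proof.
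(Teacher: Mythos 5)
Your computation of the dimension vectors, the identification $\tau N_\lambda\simeq M_\lambda$ for homogeneous $\lambda$, and the uniqueness step via real roots (Kac: a positive real root admits a unique indecomposable, and $q(\delta\pm\ddim S_e)=q(\ddim S_e)=1$) are all fine — the last point is even a legitimate alternative to the paper's route. But the indecomposability of $B_\lambda$ and of $C=\coker f$, exactly the point you flag as needing care, is not established by your arguments. For $B_\lambda$: a simple submodule of $X\oplus Y$ need not lie in a single summand, and the principle you are implicitly invoking — that a non-split extension of an indecomposable by a simple has indecomposable middle term — is false in general. For the quiver $1\fl 2 \leftarrow 3$ of type $\A_3$ and $M$ the indecomposable of dimension vector $(1,1,1)$, the unique non-split extension of $M$ by $S_2$ is $0\fl S_2\fl P_1\oplus P_3\fl M\fl 0$, with $S_2$ embedded anti-diagonally; so some euclidean-specific input is unavoidable. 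For $C$: the preimages $\pi^{-1}(C_i)$ both contain $S_e$, so they never yield a direct decomposition of $M_\lambda$, and indeed the quotient of an indecomposable by a simple submodule can be decomposable (same example: $M/S_2\simeq S_1\oplus S_3$). A further minor slip: the unique indecomposable attached to a positive real root need not be transjective (quasi-simples of exceptional tubes are rigid regular), though this is harmless since you only use uniqueness.

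The missing ingredient is the one the paper uses: the defect $\d_X=\<\delta,\ddim X\>$. By Lemma \ref{lem:extension1proj}, $B_\lambda$ is postprojective (transjective, and an extension of a regular by a projective), and additivity of $\d$ on $0\fl S_e\fl B_\lambda\fl N_\lambda\fl 0$ gives $\d_{B_\lambda}=\d_{S_e}=-1$ since $\delta_e=1$ and $\d_{N_\lambda}=0$; as every indecomposable postprojective summand has defect $\leq -1$, $B_\lambda$ must be indecomposable. Similarly $\ker f=0$ because $S_e$ is simple, and $C=\coker f$ is a quotient of the regular module $\tau N_\lambda\simeq M_\lambda$, hence has no postprojective summand; it has no regular summand either, since a nonzero map $M_\lambda\fl R$ with $R$ indecomposable regular forces $R\in\mathcal T_\lambda$, whose modules have dimension vector at least $\delta$, whereas $\ddim C=\delta-\ddim S_e$. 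So $C$ is preinjective with $\d_C=1$, hence indecomposable. With these two facts in place, either your real-root uniqueness or the paper's observation that indecomposable postprojective (respectively preinjective) modules are determined by their dimension vectors completes the proof.
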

		\begin{proof}
			According to Lemma \ref{lem:extension1proj}, $B_\lambda$ is the only module such that there exists a non-split exact sequence 
			$$0 \fl S_e \fl B_\lambda \fl N_\lambda \fl 0$$
			and $B_\lambda$ is postprojective. 

			We denote by $\d_X=\<\delta, \ddim X\>$ the \emph{defect} of a representation $X$ of $Q$ (see e.g. \cite{DR:memoirs} for details). Since $\delta_e=1$, we have $\d_{S_e}=\d_{P_e}=-1$. Applying this form to the short exact sequence above, we get $\d_{B_\lambda}=\d_{S_e}=-1$ so that $B_\lambda$ is an indecomposable postprojective module. Up to isomorphism, it is thus determined by its dimension vector which is $\delta + \ddim S_e$.

			Now, according to Lemma \ref{lem:extension1proj}, $B_\lambda'$ is isomorphic to $\ker f \oplus \coker f[-1]$ for any non-zero morphism $f \in \Hom_{\kQ}(S_e,\tau N_\lambda)$. Since $S_e$ is simple, then $\ker f$ is zero and thus $B_\lambda' \simeq C[-1]$ where $C=\coker f$. Applying the defect form to the short exact sequence
			$$0 \fl S_e \xrightarrow{f} \tau N_\lambda \fl C \fl 0$$
			we get $\d_{C}=-\d_{S_e}=1$. Since $C$ is preinjective, it is indecomposable and thus entirely determined by its dimension vector which is $\delta -\ddim S_e$.
		\end{proof}

		We put a partial order $\leq $ on $\Z^{Q_0}$ by setting $(e_i)_{i \in Q_0} \leq (f_i)_{i \in Q_0}$ if $e_i \leq f_i$ for every $i \in Q_0$.

		\begin{lem}\label{lem:BHinj}
			Let $Q$ be an euclidean quiver. Assume that $e$ is a source. Then, for any $\lambda \in \P^{\mathcal H}$, the following hold~:
			\begin{enumerate}
				\item $B_\lambda$ is the unique (up to isomorphism) indecomposable module of dimension vector $\delta + \ddim S_e$~;
				\item if $\ddim (\tau S_e) \leq \delta$, then $B_\lambda'$ is the unique (up to isomorphism) indecomposable module of dimension vector $\delta - \ddim (\tau S_e)$~;
					\item if $\delta \leq \ddim (\tau S_e)$, then $B_\lambda' \simeq C[-1]$ where $C$ is the unique (up to isomorphism) indecomposable module of dimension vector $\ddim (\tau S_e) - \delta$.
				\end{enumerate}
			\end{lem}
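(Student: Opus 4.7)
The plan is to adapt the strategy of Lemma \ref{lem:BHproj} to the source case. Since $e$ is a source with $\delta_e=1$ we have $S_e\simeq I_e$, whence $\d_{S_e}=1$; and since $\lambda\in\P^{\mathcal H}$ the tube $\mathcal T_\lambda$ is homogeneous, so $\tau M_\lambda\simeq M_\lambda$ and $N_\lambda=M_\lambda[1]$ is identified with the $\kQ$-module $M_\lambda$ of dimension vector $\delta$. Moreover $\tau S_e$ is identified with the AR-translate in $\modd\kQ$ of the non-projective injective $I_e$, which is a preinjective module with $\d_{\tau S_e}=\d_{S_e}=1$.

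For part (1), Lemma \ref{lem:extension1inj} provides the non-split short exact sequence $0\to N_\lambda\to B_\lambda\to S_e\to 0$ together with the fact that $B_\lambda$ is preinjective. Applying defect additivity yields $\d_{B_\lambda}=\d_{N_\lambda}+\d_{S_e}=0+1=1$. Since every indecomposable preinjective has defect $\geq 1$, any non-trivial decomposition of $B_\lambda$ into preinjective summands would force the sum of defects to exceed $1$; hence $B_\lambda$ is indecomposable, and as an indecomposable transjective object it is determined up to isomorphism by its dimension vector $\delta+\ddim S_e$.

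For parts (2) and (3), Lemma \ref{lem:extension1inj} gives $B_\lambda'\simeq\ker f\oplus\coker f[-1]$ for any non-zero $f\in\Hom_{\kQ}(N_\lambda,\tau S_e)$. Since $N_\lambda=M_\lambda$ is regular and $\tau S_e$ is preinjective, the standard vanishing of $\Hom$ between incompatible components of $\modd\kQ$ prevents $\ker f$ from having regular or preinjective summands and $\coker f$ from having regular or postprojective summands; together with the transjectivity of $B_\lambda'$, this forces $\ker f$ to be postprojective or zero and $\coker f$ to be preinjective or zero. Factoring $f$ through its image yields short exact sequences $0\to\ker f\to N_\lambda\to\mathrm{im}\, f\to 0$ and $0\to\mathrm{im}\, f\to\tau S_e\to\coker f\to 0$, and defect additivity gives the key identity $\d_{\coker f}-\d_{\ker f}=1$.

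The case analysis then proceeds by contradiction. In case (2), where $\ddim\tau S_e\leq\delta$: if $\coker f\neq 0$ then $\d_{\coker f}\geq 1$ forces $\d_{\ker f}\geq 0$, hence $\ker f=0$ and $f$ is injective, whence $\ddim\coker f=\ddim\tau S_e-\delta\leq 0$, contradicting $\coker f\neq 0$; so $f$ is surjective and $B_\lambda'=\ker f$ is postprojective of defect $-1$ and dimension vector $\delta-\ddim\tau S_e$, hence uniquely determined by this dimension vector as in part (1). Case (3) is dual: assuming $\ker f\neq 0$ yields $\d_{\coker f}\leq 0$, hence $\coker f=0$ and $f$ is surjective, giving $\ddim\ker f=\delta-\ddim\tau S_e\leq 0$ under the hypothesis $\delta\leq\ddim\tau S_e$, a contradiction; so $f$ is injective and $B_\lambda'\simeq C[-1]$ with $C=\coker f$ the unique indecomposable preinjective of defect $1$ and dimension vector $\ddim\tau S_e-\delta$. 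The main obstacle, compared with the sink case of Lemma \ref{lem:BHproj}, is that $N_\lambda$ is no longer simple, so the dichotomy between $\ker f=0$ and $\coker f=0$ cannot be obtained directly and must be extracted from the defect and dimension bookkeeping above.
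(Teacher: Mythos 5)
Your proof is correct and takes essentially the same route as the paper's: it invokes Lemma \ref{lem:extension1inj} for the exact sequences describing $B_\lambda$ and $B_\lambda'=\ker f\oplus\coker f[-1]$, uses the defect form to get indecomposability and the dichotomy $\ker f=0$ or $\coker f=0$, and then identifies the modules by their dimension vectors, exactly as in the paper (your explicit contradiction argument in each case is just a reorganization of the paper's case split). Incidentally, your identity $\d_{\coker f}-\d_{\ker f}=1$ is the correct-sign version of the relation printed in the paper's proof (which reads $\d_K=1+\d_C$, evidently a typo), so your defect bookkeeping is sound.
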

			\begin{proof}
				According to Lemma \ref{lem:extension1inj}, $B_\lambda$ is the only module such that there exists a non-split exact sequence 
				$$0 \fl N_\lambda \fl B_\lambda \fl S_e \fl 0.$$
				Since $N_\lambda$ is quasi-simple and $S_e$ is injective, then $B_\lambda$ is preinjective. Applying the defect form, we get $\d_{B_\lambda}=\d_{S_e}=1$ so that $B_\lambda$ is indecomposable preinjective. Up to isomorphism, it is thus determined by its dimension vector which is $\delta + \ddim S_e$.

				Now, according to Lemma \ref{lem:extension1inj}, $B_\lambda'$ is isomorphic to $\ker f \oplus \coker f[-1]$ for any non-zero morphism $f \in \Hom_{\kQ}(N_\lambda,\tau S_e)$. We set $K = \ker f$ and $C = \coker f$. Since $N_\lambda$ is quasi-simple, then $K$ is necessarily postprojective and $C$ is necessarily preinjective. Applying the defect form to the exact sequence
				$$0 \fl K \fl N_\lambda \xrightarrow{f} \tau S_e \fl C \fl 0$$
				we get $\d_K=1+\d_C$. Since $\d_K \leq 0$ and $\d_C \geq 0$, we get that either $K=0$ and $C$ is indecomposable or $C=0$ and $K$ is indecomposable.

				If $C=0$, then $f$ is an epimorphism and thus $\ddim (\tau S_e) \leq \delta$. In this case, $B_\lambda'=K$ is entirely determined by its dimension vector which is $\delta - \ddim (\tau S_e)$. 

				If $K=0$, then $f$ is a monomorphism so that $\delta \leq \ddim (\tau S_e)$. In this case $C$ is entirely determined by its dimension vector which is $\ddim (\tau S_e) - \delta$ and $B_\lambda' = C[-1]$.
			\end{proof}

			\begin{corol}
				Let $Q$ be an euclidean quiver. Then for any $\lambda, \mu \in \P^{\mathcal H}$, we have 
				$$B_\lambda \simeq B_\mu \textrm{ and } B_{\lambda}' \simeq B_\mu'.$$ \hfill \qed
			\end{corol}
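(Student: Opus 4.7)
The plan is to observe that the preceding two lemmas (Lemma \ref{lem:BHproj} for the sink case and Lemma \ref{lem:BHinj} for the source case) already give descriptions of $B_\lambda$ and $B_\lambda'$ that are manifestly independent of the parameter $\lambda \in \P^{\mathcal H}$. Concretely, in every one of the three cases treated there, the object is characterized up to isomorphism either as \emph{the} indecomposable postprojective (respectively preinjective) module with a prescribed dimension vector, or as the shift $[-1]$ of such a module, and the dimension vector involved is expressed purely in terms of $\delta$ and $\ddim S_e$ (or $\ddim \tau S_e$), none of which depend on $\lambda$.

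Thus the proof will simply enumerate the cases, invoke the relevant lemma, and observe that the characterizing data is $\lambda$-independent. First, assume $e$ is a sink. By Lemma \ref{lem:BHproj}(1), $B_\lambda$ is the unique indecomposable of dimension vector $\delta + \ddim S_e$, so this module is independent of $\lambda$; by (2), $B_\lambda' \simeq C[-1]$ for $C$ the unique indecomposable of dimension vector $\delta - \ddim S_e$, again independent of $\lambda$. Next, assume $e$ is a source. By Lemma \ref{lem:BHinj}(1), $B_\lambda$ is the unique indecomposable of dimension vector $\delta + \ddim S_e$. For $B_\lambda'$, one splits according to whether $\ddim \tau S_e \leq \delta$ or $\delta \leq \ddim \tau S_e$; note that this dichotomy is itself a property of the quiver $Q$ and the vertex $e$, not of $\lambda$, so the same branch of Lemma \ref{lem:BHinj} applies uniformly across all $\lambda \in \P^{\mathcal H}$, and in each branch $B_\lambda'$ is characterized by a $\lambda$-independent dimension vector.

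There is essentially no obstacle: the corollary is a bookkeeping consequence of the two preceding lemmas, since all the representation-theoretic work (constructing the extensions, computing defects, and identifying the middle terms up to isomorphism by dimension vector) has already been done there. The only small point to be careful about is noting that in the source case, the inequality determining which branch of Lemma \ref{lem:BHinj} applies is an intrinsic statement about $\delta$ and $\ddim \tau S_e$, hence the same branch is used for every $\lambda \in \P^{\mathcal H}$; this ensures $B_\mu'$ and $B_\lambda'$ are produced by the same recipe and so coincide.
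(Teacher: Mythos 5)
Your proposal is correct and matches the paper's intended argument: the corollary is stated with no written proof precisely because Lemmas \ref{lem:BHproj} and \ref{lem:BHinj} characterize $B_\lambda$ and $B_\lambda'$ (up to shift) by dimension vectors built only from $\delta$, $\ddim S_e$ and $\ddim \tau S_e$, which do not depend on $\lambda$. Your added remark that the dichotomy in the source case is intrinsic to $Q$ and $e$ is a correct and harmless refinement of the same observation.
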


	\subsection{Exceptional tubes in type $\Aaffine$}\label{ssection:Aaffine}
		Let $Q$ be an euclidean quiver of type $\Aaffine_{r,s}$, that is, with $r$ arrows going clockwise and $s$ arrows going counter-clockwise. The Auslander-Reiten-quiver $\Gamma(\CC_{Q})$ contains two exceptional tubes $\mathcal T_0$ and $\mathcal T_1$ of respective ranks $r$ and $s$. 

		Since $Q$ is acyclic it contains at least one sink and at least one source. Moreover, since $\delta_i=1$,  for any $i \in Q_0$, in this case, the point $e$ can be any of these sinks or sources. In order to fix notations, we consider the case where $e$ is a sink. 

		There exists thus a unique clockwise (or counter-clockwise, respectively) arrow in $Q_1$ with target $e$. We denote by $i_+$ (or $i_-$, respectively) the source of this unique clockwise (or counter-clockwise, respectively) arrow. Locally, $Q$ can be depicted as follows~:
		$$\xymatrix{
			\cdots & \ar[l]  \ar[r] & \cdots \ar[r] & i_+ \ar[rd] \\
			& & & & e. \\
			\cdots & \ar[l]  \ar[r] & \cdots \ar[r] & i_- \ar[ru] 
		}$$
		We denote by $M_0$ (or $M_1$) the quasi-simple module in $\mathcal T_0$ (or $\mathcal T_1$, respectively) given by Lemma \ref{lem:Mqs}. 

		It is not hard to prove (see for instance \cite[\S 9.6]{mathese}) that $M_0$ and $M_1$ are given by~:
		$$\xymatrix{
			& \cdots & \ar[l] 0 \ar[r] & \cdots \ar[r] & 0 \ar[rd] \\
			M_0: & & & & & \k \\
			& \cdots 0 & \ar[l]^0 \k \ar[r]_1 & \cdots \ar[r]_1 & \k \ar[ru]_1 
		}$$

		$$\xymatrix{
			& \cdots 0 & \ar[l]_0 \k \ar[r]^1 & \cdots \ar[r]^1 & \k \ar[rd]^1 \\
			M_1: & & & & & \k. \\
			& \cdots & \ar[l] 0 \ar[r] & \cdots \ar[r] & 0 \ar[ru] 
		}$$
		
		The set $\ens{\tau^{k}M_0|0 \leq k \leq r-1}$ is a complete list of pairwise non-isomorphic quasi-simple modules in $\mathcal T_0$ and the set $\ens{\tau^{k}M_1|0 \leq k \leq s-1}$ is a complete list of pairwise non-isomorphic quasi-simple modules in $\mathcal T_1$. 
		Since we considered a sink $e$, we set $N_0=M_0[-1]$ and $N_1=M_1[-1]$.  We easily check that $N_0 \simeq P_{i+}/S_e$. We have a non-split extension of $\kQ$-modules
		$$0 \fl S_e \fl P_{i_+} \fl N_0 \fl 0.$$
		Moreover, if $f$ is a non-zero morphism $S_e \fl \tau N_0 \simeq M_0$, then $\ker f=0$ since $S_e$ is simple and $\coker f\simeq M_0/S_e=I_{i_-}$. Thus, we get 
		$$B_0 \simeq P_{i_+} \textrm{ and } B_0' \simeq I_{i_-}[-1] \simeq P_{i_-}[1].$$
		Similarly, 
		$$B_1 \simeq P_{i_-} \textrm{ and } B_1' \simeq I_{i_+}[-1] \simeq P_{i_+}[1].$$

		For $\lambda \in \P^{\mathcal H}$, it follows from Lemma \ref{lem:BHproj} that $B_\lambda$ is the unique indecomposable postprojective module $E$ with dimension vector $\delta + \ddim S_e$ and $B_\lambda' \simeq C[-1]$ where $C$ is the unique indecomposable preinjective module with dimension vector $\delta - \ddim S_e$.

 		Summing up in a table, we get~:
		\begin{table}[H]
			$$\begin{array}{|c|c|c|c|c|c|c|}
				\hline
				\lambda \in \P^1(\k) & p_{\lambda} & B_{\lambda } & B'_{\lambda }\\
				\hline
				0 & r & P_{i_+} & P_{i_-}[1]\\
				\hline
				1 & s & P_{i_-} & P_{i_+}[1]\\
				\hline
				\lambda \in \P^{\mathcal H} & 1 & E & C[-1]\\
				\hline
			\end{array}$$
			\caption{$B_\lambda$ and $B_\lambda'$ in type $\Aaffine_{r,s}$}\label{tab:Aaffine}
		\end{table}

	\subsection{Exceptional tubes in type $\Daffine$}\label{ssection:Daffine}
		We now assume that $Q$ is an euclidean quiver of type $\Daffine_{n+3}$ with $n \geq 1$. We moreover assume that $Q$ is equipped with the orientation in the tables of \cite{DR:memoirs}. That is
		$$\xymatrix{
			&	a_1 \ar[rd] &&&& b_1 \\
			Q:	 && c_1 \ar[r] &\cdots \ar[r] & c_n \ar[rd]\ar[ru] \\ 
			&	a_2 \ar[ru] &&&& b_2 \\
		}$$
		We set $e=b_1$ which is a sink.

		There are three exceptional tubes $\mathcal T_{1}, \mathcal T_{\infty}$ and $\mathcal T_0$ of respective ranks $n+1$, 2 and 2. We denote by $M_1$, $M_{\infty}$ and $M_0$ the quasi-simple modules given by Lemma \ref{lem:Mqs}.

		\subsubsection{The tube of rank $n+1$}
			The quasi-simple modules in $\mathcal T_1$ are the $S_{c_i}, i=1, \ldots, n$ and the sincere module $M_{1}$ such that $M_1(i) \simeq \k$ for every $i \in Q_0$ and $M_1(\alpha)=1_{\k}$ for every $\alpha \in Q_1$. We have $\tau S_{c_i} \simeq S_{c_{i+1}}$ if $i \neq n$ and $\tau S_{c_n} \simeq M_1$. 

			We set $N_1=M_1[-1] \simeq S_{c_n}$. With the notations of Lemma \ref{lem:extension1proj}, we get 
			$$B_1 \simeq P_{c_n}/P_{b_2}$$
			and $B'_1=\ker f \oplus \coker f[-1]$ for $0 \neq f \in \Hom_{\kQ}(S_{b_1},M_1)$. But $\ker f= 0$ since $P_{b_1} \simeq $ is simple to that 
			$$B'_1=\coker f[-1] \simeq I_{b_2}[-1] \simeq P_{b_2}[1].$$

		\subsubsection{Tubes of rank $2$}
			Let $\mathcal T_0$ be the tube whose quasi simples are 
			$$\xymatrix{
				&	0 \ar[rd] &&&& \k \\
				M_0=	 && \k \ar[r] &\cdots \ar[r] & \k \ar[rd]\ar[ru] \\ 
				&	\k \ar[ru] &&&& 0
			}$$
			$$\xymatrix{
				&	\k \ar[rd] &&&& 0 \\
				N_0=	 && \k \ar[r] &\cdots \ar[r] & \k \ar[rd]\ar[ru] \\ 
				&	0 \ar[ru] &&&& \k
			}$$
			With the notations of Lemma \ref{lem:extension1proj}, we get 
			$$B_0 \simeq P_{a_1}$$
			and $B_0'=\ker f \oplus \coker f[-1]$ for $0 \neq f \in \Hom_{\kQ}(S_{b_1}, M_0)$. Since $S_{b_1}$ is simple, $\ker f=0$ and thus $\coker f$ is the representation 
			$$\xymatrix{
				0 \ar[rd] &&&& 0 \\
				& \k \ar[r] &\cdots \ar[r] & \k \ar[rd]\ar[ru] \\ 
				\k \ar[ru] &&&& 0.
			}$$
			In particular, $\coker f$ is indecomposable preinjective so that it is determined by its dimension vector. An easy induction on $n$ shows that $\ddim \coker f = \ddim \tau^{n}I_{a_2}$ so that $\coker f \simeq \tau^n I_{a_2} \simeq I_{a_2}[n]$ and thus 
			$$B'_0 \simeq I_{a_2}[n-1].$$

			For $\mathcal T_{\infty}$, we notice that the group $\Z/2\Z$ acts by automorphisms on $\CC_{Q}$ by exchanging points $a_1$ and $a_2$. This action preserves the point $e=b_1$ so that 
			$$B_\infty \simeq P_{a_2} \textrm{ and } B_\infty' \simeq I_{a_1}[n-1].$$
			
			For $\lambda \in \P^{\mathcal H}$, it follows from Lemma \ref{lem:BHproj} that $B_\lambda$ is the unique indecomposable postprojective module $E$ with dimension vector $\delta + \ddim S_e$ and $B_\lambda' \simeq C[-1]$ where $C$ is the unique indecomposable preinjective module with dimension vector $\delta - \ddim S_e$.

			Summing up in a table, we get~:
			\begin{table}[H]
				$$\begin{array}{|c|c|c|c|c|c|c|}
					\hline
					\lambda \in \P^1(\k) & p_{\lambda} & B_{\lambda } & B'_{\lambda } & e\\
					\hline
					1 & n+1 & P_{c_n}/P_{b_2} & I_{b_2}[-1] & e=b_1\\
					\hline
					0 & 2 & P_{a_1} & I_{a_2}[n-1] & e=b_1\\
					\hline
					\infty & 2 & P_{a_2} & I_{a_1}[n-1] & e=b_1\\
					\hline
					\lambda \in \P^{\mathcal H} & 1 & E & C[-1] & e=b_1\\
					\hline
				\end{array}$$
				\caption{$B_\lambda$ and $B_\lambda'$ in type $\Daffine_{n+3}, n \geq 1$}\label{tab:Daffine}
			\end{table}

	\subsection{Exceptional tubes in type $\Eaffine_6$}\label{ssection:E6affine}
		Let $Q$ be the quiver of euclidean type $\Eaffine_6$ with the orientation considered in the tables of \cite{DR:memoirs}.
		$$\xymatrix{
			&&& 5 \ar[d] \\
			&&& 4 \ar[d] \\
		Q: & 3 \ar[r] & 2 \ar[r] & 1 & \ar[l] 6 & \ar[l] 7
		}$$
		
		There are three exceptional tubes $\mathcal T_{\infty},\mathcal T_0,\mathcal T_1$ of respective ranks 2, 3 and 3. We set $e=7$ which is a source. Note that $I_e \simeq S_e = S_7$ and that $\tau S_e \simeq S_6$.

		For any $\lambda \in \P^{\mathcal E}$, we denote by $M_{\lambda}$ the quasi-simple module given in Lemma \ref{lem:Mqs} and set $N_{\lambda} \simeq M_{\lambda}[1]$. We denote by $B_{\lambda}$ and $B_{\lambda}'$ the unique objects in $\CC_{Q}$, given by Lemma \ref{lem:extension1inj} such that there is a non-split short exact sequence 
		$$0 \fl N_{\lambda} \fl B_{\lambda} \fl S_e \fl 0$$
		and $B_{\lambda}'=\ker f \oplus \coker f[-1]$ for some non-zero morphism $f \in \Hom_{\kQ}(N_{\lambda},\tau S_e)$. Since $\tau S_e$ is simple, $B_\lambda' =\ker f$. As in the proof of Lemma \ref{lem:BHinj}, it is easy to see that $B_{\lambda}$ and $B_{\lambda}'$ are determined by their dimension vectors. 

		The following table sums up the dimension vectors for $\mathcal T_0$ and $\mathcal T_{\infty}$~:
		$$\begin{array}{|c|c|c|c|}
		\hline 
			\lambda \in \P^1(\k) & \ddim N_{\lambda} & \ddim B_{\lambda} & \ddim B_{\lambda}' \\
		\hline
			{\infty} & [1101010] & [1101011] & [1101000]\\
			{0} & [1110010] & [1101011] & [1110000]\\
		\hline
		\end{array}$$

 		Now it is possible, using for example the algorithms provided in Section \ref{ssection:recognise}, to compute explicitly the indecomposable modules having these dimension vectors. We get
		$$B_{\infty} \simeq \tau^3 I_7 \textrm{ and } B_\infty' \simeq \tau^{-1} P_7$$
		and
		$$B_{0} \simeq \tau^2 I_5 \textrm{ and } B_0' \simeq P_3.$$
		For $\mathcal T_1$, we deduce it from $\mathcal T_0$ using the fact that the action of the product of transpositions $(2,6)(3,7)$ on $Q_0$ induces an action on $\CC_{Q}$ by automorphisms. Since this action exchanges 7 and 3, in this case, we have to take $e=3$ and we get 
		$$B_1 \simeq \tau^2 I_5 \textrm{ and } B_1' \simeq P_7.$$

		For $\lambda \in \P^{\mathcal H}$ and $e=7$, we can apply Lemma \ref{lem:BHinj} and see that $B_\lambda$ is the unique indecomposable (preinjective) module with dimension vector $[3212122]$ and $B_\lambda'$ is the unique indecomposable (preprojective) module with dimension vector $[3212111]$. Applying the recognition algorithm given in Section \ref{ssection:recognise}, we get 
		$$B_\lambda \simeq \tau^6 I_7 \textrm{ and } B_\lambda' \simeq \tau^{-4} P_7.$$

		Summing up in a table, we get
		\begin{table}[H]
			$$\begin{array}{|c|c|c|c|c|c|c|}
				\hline
				\lambda \in \P^1(\k) & p_{\lambda} & B_{\lambda } & B'_{\lambda } & e\\
				\hline
				0 & 2 & \tau^3 I_7 & \tau^{-1} P_7 & e=7\\
				\hline
				1 & 3 & \tau^2 I_5 & P_3 & e=7\\
				\hline
				\infty & 3 & \tau^2 I_5 & P_7 & e=3\\
				\hline
				\lambda \in \P^{\mathcal H} & 1 & \tau^7 I_7 & \tau^{-4} P_7 & e=7\\
				\hline
			\end{array}$$
			\caption{$B_\lambda$ and $B_\lambda'$ in type $\Eaffine_{6}$}\label{tab:E6affine}
		\end{table}
		
		Note that an explicit computations of the corresponding variables is given in Section \ref{section:varE6}.

	\subsection{Exceptional tubes in type $\Eaffine_7$}\label{ssection:E7affine}
		Let $Q$ be the quiver of euclidean type $\Eaffine_7$ with the orientation considered in \cite{DR:memoirs}.
		$$\xymatrix{
				&&&& 5 \ar[d] \\
		Q: & 4 \ar[r] & 3 \ar[r] & 2 \ar[r] & 1 & \ar[l] 6 & \ar[l] 7 & \ar[l] 8
		}$$
		There are three exceptional tubes $\mathcal T_\infty,\mathcal T_0,\mathcal T_1$ of respective ranks 2,3 and 4. We consider $e=8$ which is a source. We have $I_e \simeq S_e =S_8$ and $\tau S_e \simeq S_7$. As for type $\Eaffine_6$, we get the following table~:

		\begin{table}[H]
			$$\begin{array}{|c|c|c|c|c|c|c|}
				\hline
				\lambda \in \P^1(\k) & p_{\lambda} & B_{\lambda } & B'_{\lambda } & e\\
				\hline
				{\infty} & 2 & \tau^6 I_4 & \tau^{-4} P_4 & e=8 \\
				\hline
				{0} & 3 & \tau^4 I_8 & \tau^{-2} P_8 & e=8 \\
				\hline
				{1} & 4 & \tau^3 I_4 & \tau^{-1} P_4 & e=8 \\
				\hline
				\lambda \in \P^{\mathcal H} & 1 & \tau^{12} I_8 & \tau^{-10} P_8 & e=8 \\
				\hline
			\end{array}$$
			\caption{$B_\lambda$ and $B_\lambda'$ in type $\Eaffine_{7}$}\label{tab:E7affine}
		\end{table}

	\subsection{Exceptional tubes in type $\Eaffine_8$}\label{ssection:E8affine}
		Let $Q$ be the quiver of euclidean type $\Eaffine_8$ with the canonical orientation
		$$\xymatrix{
				&&& 4 \ar[d] \\
		Q: & 3 \ar[r] & 2 \ar[r] & 1 & \ar[l] 5 & \ar[l] 6 & \ar[l] 7 & \ar[l] 8& \ar[l] 9
		}$$
		There are three exceptional tubes $\mathcal T_\infty,\mathcal T_0,\mathcal T_1$ of respective ranks 2,3 and 5. We take $e=9$ which is a source. We have $I_e \simeq S_e=S_9$ and $\tau S_e \simeq S_8$. As before, we get~:
		\begin{table}[H]
			$$\begin{array}{|c|c|c|c|c|c|}
				\hline
				\lambda \in \P^1(\k) & p_{\lambda} & B_{\lambda } & B'_{\lambda }\\
				\hline
				{\infty} & 2 & \tau^{15} I_9 & \tau^{-13} P_9\\
				\hline
				{0} & 3 & \tau^{10} I_9 & \tau^{-8} P_9\\
				\hline
				{1} & 5 & \tau^{6} I_9 & \tau^{-4} P_9\\
				\hline
				\lambda \in \P^{\mathcal H} & 1 & \tau^{30} I_9 & \tau^{-28} P_9\\
				\hline
			\end{array}$$
			\caption{$B_\lambda$ and $B_\lambda'$ in type $\Eaffine_{8}$}\label{tab:E8affine}
		\end{table}

	\subsection{Changing orientations}\label{ssection:orientations}
		For simplicity, we worked with prescribed orientations, except for $\Aaffine$. Nevertheless, our techniques can actually be adapted to any orientation. 

		A first method would consist in doing again the computations made earlier. Indeed, we worked with a fixed orientation in order to use the tables provided in \cite{DR:memoirs} but there is no theoretical obstruction for doing all the computations with a different orientation.

		A second method consists in observing that the strong isomorphism of cluster algebras (in the sense of \cite{cluster2}) corresponding to the orientation change can be realised combinatorially. We detail this method here.
		
		Let $Q'$ be an euclidean quiver equipped with an arbitrary orientation. The cluster algebra $\mathcal A(Q',\textbf x)$ contains a seed $(Q,\textbf u)$ such that $Q$ is an euclidean quiver of the same euclidean type as $Q'$ equipped with one of the orientations considered above. The cluster categories $\CC_Q$ and $\CC_{Q'}$ are equivalent and in particular, have the same Auslander-Reiten quiver $\Gamma$. 

		We denote by $P_i'$, with $i \in Q'_0$, the indecomposable projective $\kQ'$-modules and by $P_i$, with $i \in Q_0$, the indecomposable projective $\kQ$-modules. We denote by $X_?:\Ob(\CC_Q) \fl \Z[\textbf u^{\pm 1}]$ (or $X'_?:\Ob(\CC_{Q'}) \fl \Z[\textbf x^{\pm 1}]$ the cluster character associated to $Q$ (or ${Q'}$, respectively) and we set $f:\Gamma \fl \Z[\textbf u^{\pm 1}]$ the cluster frieze on $\Gamma$ sending $P_i[1]$ to $u_i$ for every $i \in Q_0$. 

		Let $\phi: \mathcal A(Q,\textbf u) \fl \mathcal A(Q',\textbf x)$ be the strong isomorphism of cluster algebras. According to Theorem \ref{theorem:main}, the cluster variables in $\mathcal A(Q,\textbf u)$ are the $f(M)$ where $M$ runs over the points of $\Gamma$ corresponding to indecomposable rigid objects in $\CC_{Q}$. Let $v$ be such a point in $\Gamma$. Then $v$ corresponds to some indecomposable object $M$ in $\CC_{Q}$ and to $\Phi(M)$ in $\CC_{Q'}$. It follows from \cite[\S 5]{Palu} that
		$$X'_{M}=\phi(X_{\Phi(M)})=\phi(f(\Phi(M))).$$

		Now, in order to compute cluster variables in $\mathcal A(Q',\textbf x)$, it is enough to compute first the cluster frieze $f$ on $\Gamma$ and then to replace each $u_i$ by its Laurent expansion in the cluster $\textbf x$. The first part can be done algorithmically using Theorem \ref{theorem:main} and in order to find the expansion of $u_i$ in $\textbf x$, one simply constructs the frieze $f'$ on $\mathcal P$ such that $f'(P_i'[1])=x_i$ for any $i \in Q_0$ and then replace $u_i$ by $f'(P_i[1])$ in the values of the frieze $f$. 

\section{Algorithms}\label{section:algorithms}
	\subsection{Recognising indecomposable transjective modules}\label{ssection:recognise}
		We now detail the algorithms used in Section \ref{section:explicit} in order to recognise indecomposable modules in the transjective component from their dimension vectors. The algorithm is elementary and actually works for any acyclic quiver. 

		Let $Q$ be an acyclic quiver. We assume that we know explicitly the dimension vectors of the indecomposable projective $\kQ$-modules and of the indecomposable injective $\kQ$-modules (this can be achieved with an elementary computer program).

		Let $C$ be the \emph{Cartan matrix} of $\kQ$, that is the matrix whose $i$-th column is $\ddim P_i$ or, equivalently whose $i$-th line is $\ddim I_i$ (see for instance \cite[\S III]{ASS}). Let $\Phi=-C^t C^{-1}$ be the \emph{Coxeter matrix} of $\kQ$. Note that if $Q$ is an euclidean quiver equipped with one of the orientations provided in Section \ref{section:explicit}, the explicit Coxeter matrix can be found in \cite{SS:volume2}.
		
		Then for any $i \in Q_0$, $n \geq 0$, we have 
		$$\ddim \tau^{-n} P_i = \Phi^{-n} \ddim P_i~;$$
		$$\ddim \tau^{n} I_i = \Phi^{n} \ddim P_i.$$

		Thus, if $M$ is an indecomposable postprojective module whose dimension vector is known, one can apply the following procedure. 
		\begin{algorithm}[H]
			\caption{An algorithm for recognising indecomposable postprojective modules}\label{algo:recpostprojective}
			\begin{algorithmic}[1]
			\REQUIRE $M$ is indecomposable postprojective
			\STATE  $n=0$
			\WHILE {$f=0$}
			\FOR {$i \in Q_0$}
			\IF {$\ddim M=\Phi^{-n} \ddim P_i$}
			\STATE $R:=\tau^{-n}P_i$
			\STATE $f:=1$
			\ENDIF
			\ENDFOR
			\STATE $n:=n+1$
			\ENDWHILE
			\RETURN $R$ // The module $M$ is thus isomorphic to $R$.
			\end{algorithmic}
		\end{algorithm}
		The algorithm is dual for indecomposable preinjective modules.
		

	\subsection{Computing cluster characters in the euclidean case}\label{ssection:compute}
		Let $Q$ be an euclidean quiver. In this section, we give algorithms for computing cluster characters associated to arbitrary indecomposable $\kQ$-modules. As usual, for any $\lambda \in \P^1(\k)$, $M_\lambda$ is the quasi-simple module in the tube $\mathcal T_\lambda$ given by Lemma \ref{lem:Mqs} and $p_\lambda$ is the rank of $\mathcal T_\lambda$. 
		
		First, we see that cluster characters associated to indecomposable postprojective modules can be computed recursively~:
		\begin{algorithm}[H]
			\caption{Recursive computation for postprojective modules}\label{algo:postprojective}
			\begin{algorithmic}[1]
				\REQUIRE $n \geq 0$, $i \in Q_0$
				\FOR {$j \in Q_0$}
				\STATE $X_{P_j[1]}:=u_j$
				\ENDFOR
				\STATE $$X_{P_i[-n]}:=\frac{\displaystyle \prod_{(j,m) \fl (i,-n)} X_{P_j[m]}}{X_{P_i[-n+1]}}$$
				\RETURN $X_{P_i[-n]}$
			\end{algorithmic}
		\end{algorithm}
		The algorithm is dual for indecomposable preinjective modules.

		Now that we are able to compute algorithmically characters associated to postprojective and preinjective modules, we give an algorithm to compute characters associated to indecomposable regular modules. 
		\begin{algorithm}[H]
			\caption{Computation of $X_{M_\lambda[k]^{(l)}}$ for $l \geq 1$, $k \in [0,p_\lambda-1]$  and $\lambda \in \P^1(\k)$}\label{algo:except}
			\begin{algorithmic}[1]
				\REQUIRE $\lambda \in \P^1(\k)$, $M \simeq M_\lambda[k]^{(l)}$ with $l \geq 1$ and $k \in [0, p_\lambda-1]$
				\FOR {$k \in [0, p_\lambda-1]$}
				\STATE $$X_{M_\lambda[k]}:=\frac{X_{B_\lambda[k]}+X_{B'_\lambda[k]}}{X_{S_e[k]}}$$ \COMMENT {Characters arising in the right-hand side can be computed using Algorithm \ref{algo:postprojective} and its dual version for preinjective modules} 
				\ENDFOR
				\STATE $X_{M}:=P_l(X_{M_\lambda[(k \mod p_\lambda)]}, \ldots, X_{M_\lambda[((k+l-1) \mod p_\lambda)]})$
				\RETURN $X_{M}$
			\end{algorithmic}
		\end{algorithm}

\section{An example~: Regular cluster variables in type $\Eaffine_6$}\label{section:varE6}
		Let $Q$ be the euclidean quiver of type $\Eaffine_6$ considered in Section \ref{ssection:E6affine}. For any $\lambda \in \P^1(\k)$, we provide an explicit expansion formula of the cluster variable corresponding to regular modules at the mouths of the exceptional tubes.
	
		In the tube $\mathcal T_0$ of rank two, we have~:
		\begin{align*}
			X_{N_0} & = 
				\frac{1}{u_1 u_2 u_4 u_6} (u_2 u_3 u_4 u_5 u_6 u_7 + u_1^3 + u_1^2 u_3 + u_1^2 u_5 \\
				& + u_1 u_3 u_5 + u_1^2 u_7 + u_1 u_3 u_7 + u_1 u_5 u_7 + u_3 u_5 u_7)
		\end{align*}
		and
		\begin{align*}
			X_{N_0[1]} &= 
				\frac 1{u_1^2 u_2 u_3 u_4 u_5 u_6 u_7} \left(u_2^2 u_3 u_4^2 u_5 u_6^2 u_7 \right.\\ 
				& + u_1 u_2 u_3 u_4 u_5 u_6^2 + u_1 u_2 u_3 u_4^2 u_6 u_7 + u_1 u_2^2 u_4 u_5 u_6 u_7 \\
				& + u_1^3 u_2 u_4 u_6 + u_1 u_2 u_3 u_4 u_5 u_6 + u_1 u_2 u_3 u_4 u_6 u_7 + u_1 u_2 u_4 u_5 u_6 u_7\\
				& + 2 u_2 u_3 u_4 u_5 u_6 u_7 + u_1^3 u_2 u_4 + u_1^3 u_2 u_6 + u_1^3 u_4 u_6 + u_1^2 u_3 u_4 u_6 \\
				& + u_1^2 u_2 u_5 u_6 + u_1^2 u_2 u_4 u_7 + u_1^3 u_2 + u_1^3 u_4 + u_1^2 u_3 u_4 + u_1^2 u_2 u_5 \\
				& + u_1^3 u_6 + u_1^2 u_3 u_6 + u_1^2 u_5 u_6 + u_1 u_3 u_5 u_6 + u_1^2 u_2 u_7 + u_1^2 u_4 u_7 \\
				& + u_1 u_3 u_4 u_7 + u_1 u_2 u_5 u_7 + u_1^3 + u_1^2 u_3 + u_1^2 u_5 + u_1 u_3 u_5 \\
				& \left. + u_1^2 u_7 + u_1 u_3 u_7 + u_1 u_5 u_7 + u_3 u_5 u_7 \right)
		\end{align*}

		In the tube $\mathcal T_1$ of rank three, we obtain~:
		$$X_{N_1} = \frac{u_2 u_3 u_4 u_6 u_7 + u_1^2 u_2 + u_1 u_2 u_7 + u_1^2 + u_1 u_3 + u_1 u_7 + u_3 u_7}{u_1 u_2 u_3 u_6}$$
		$$X_{N_1[1]} = \frac{u_2 u_3 u_4 u_5 u_6 + u_1^2 u_4 + u_1 u_3 u_4 + u_1^2 + u_1 u_3 + u_1 u_5 + u_3 u_5}{u_1 u_2 u_4 u_5}$$
		$$X_{N_1[2]} = \frac{u_2 u_4 u_5 u_6 u_7 + u_1^2 u_6 + u_1 u_5 u_6 + u_1^2 + u_1 u_5 + u_1 u_7 + u_5 u_7}{u_1 u_4 u_6 u_7}$$
		
		In the tube $\mathcal T_{\infty}$ of rank three, we get~:
		$$X_{N_{\infty}} = \frac{u_2 u_3 u_4 u_6 u_7 + u_1^2 u_6 + u_1 u_3 u_6 + u_1^2 + u_1 u_3 + u_1 u_7 + u_3 u_7}{u_1 u_2 u_6 u_7}$$
		$$X_{N_\infty[1]} = \frac{u_2 u_4 u_5 u_6 u_7 + u_1^2 u_4 + u_1 u_4 u_7 + u_1^2 + u_1 u_5 + u_1 u_7 + u_5 u_7}{u_1 u_4 u_5 u_6}$$
		$$X_{N_\infty[2]} = \frac{u_2 u_3 u_4 u_5 u_6 + u_1^2 u_2 + u_1 u_2 u_5 + u_1^2 + u_1 u_3 + u_1 u_5 + u_3 u_5}{u_1 u_2 u_3 u_4}$$

		This provides an interesting result in view of Fomin-Zelevinsky's positivity conjecture~:
		\begin{prop}
			Let $Q$ be an euclidean quiver of type $\Eaffine_6$ with the orientation considered in Section \ref{section:explicit}. Then the regular cluster variables of $\mathcal A(Q)$ belong to $\Z_{\geq 0}[\textbf u^{\pm 1}]$.
		\end{prop}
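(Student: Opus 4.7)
The plan is to reduce the statement to a finite, explicit check on the quasi-simple and quasi-length two cluster characters in the three exceptional tubes, and then carry out that check directly.

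By \cite{CK2} and Theorem \ref{theorem:main}, a regular cluster variable of $\mathcal A(Q)$ is of the form $X_M=f(M)$ for some indecomposable rigid regular object $M$ of $\CC_Q$. Such an $M$ lies in an exceptional tube $\mathcal T_\lambda$ with $\lambda \in \{0,1,\infty\}$, and a standard fact about tubes is that an indecomposable regular module in a tube of rank $p_\lambda$ is rigid if and only if its quasi-length is strictly less than $p_\lambda$. In our situation this gives $2$ rigid quasi-simples in the rank $2$ tube $\mathcal T_0$, and in each of the two rank $3$ tubes $\mathcal T_1$ and $\mathcal T_\infty$ three quasi-simples together with three modules of quasi-length $2$, for a total of $14$ regular cluster variables to examine.

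First I would handle the quasi-simple case. The cluster characters $X_{N_0}$, $X_{N_0[1]}$, $X_{N_1[k]}$ and $X_{N_\infty[k]}$ for $k\in\{0,1,2\}$ have already been written out explicitly above using the cluster frieze construction (this uses Theorem \ref{theorem:main} together with the values of $B_\lambda,B_\lambda'$ computed in Section \ref{ssection:E6affine} and Algorithm \ref{algo:except}), and one sees by inspection that each of the eight expansions lies in $\Z_{\geq 0}[\textbf u^{\pm 1}]$. Next I would treat the six quasi-length two modules in $\mathcal T_1$ and $\mathcal T_\infty$. By Theorem~\ref{theorem:Chebyshev} applied to the cluster frieze, for $\lambda\in\{1,\infty\}$ and $k\in\{0,1,2\}$ one has
\begin{equation*}
  X_{N_\lambda[k]^{(2)}} \;=\; P_2\bigl(X_{N_\lambda[k]},X_{N_\lambda[k+1]}\bigr) \;=\; X_{N_\lambda[k]}\,X_{N_\lambda[k+1]}-1,
\end{equation*}
so each of these six Laurent polynomials can be obtained by a single multiplication and subtraction from the quasi-simple expansions displayed above. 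It remains to verify in each of these six cases that after expanding the product and subtracting $1$, every monomial coefficient is a nonnegative integer.

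The main obstacle is purely computational: each of the six products is a polynomial in seven variables with around thirty monomials in each factor, so one produces on the order of several hundred monomials and must check that the unique constant monomial in the product contributes a coefficient at least $1$ while every other coefficient is nonnegative. There is no conceptual difficulty, but the check should be carried out carefully (or automated in a computer algebra system), and I would organise the bookkeeping tube by tube, exploiting the $\Z/3\Z$ symmetry cycling $k\mapsto k+1$ within each of $\mathcal T_1$ and $\mathcal T_\infty$ so that really only two genuinely distinct products need to be expanded by hand, the remaining four being obtained from them by the obvious permutation of the variables $u_i$ induced by the action of $\tau$ on the mouth of the tube.
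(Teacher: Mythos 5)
Your reduction is the one the paper uses: every regular cluster variable is $X_{R^{(l)}}$ with $R$ quasi-simple in an exceptional tube and $0<l<p_\lambda$, the eight quasi-simple cases are read off from the displayed expansions, and the six quasi-length-two objects in the two rank-three tubes are handled through $P_2$, i.e. $X_{R^{(2)}}=X_R X_{R'}-1$ with $R'$ the adjacent quasi-simple. Where you diverge is only in the last step: you leave the key point (that the constant coefficient of the product $X_RX_{R'}$ is at least $1$, all other coefficients being automatically nonnegative) to a finite expansion, possibly machine-assisted, whereas the paper closes the argument without any expansion by exhibiting, for each quasi-simple $R$ in a rank-three tube, an explicit Laurent monomial $L_{R,i_0}$ of $X_R$ whose inverse occurs as a monomial of the partner factor (these six monomials, $u_4u_7/u_1$, $u_2u_5/u_1$, etc., are listed in a small table, following the method of Dupont's positivity argument for regular characters); then $X_RX_{R'}-1$ is manifestly a sum of Laurent monomials with nonnegative coefficients because the product $L_{R,i_0}\cdot L_{R,i_0}^{-1}$ absorbs the $-1$. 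Your check would succeed, so there is no gap, but the paper's observation buys a short, human-verifiable certificate in place of a several-hundred-term expansion. One small caveat: the symmetry you invoke to reduce six products to two is the order-three automorphism of the chosen orientation of $\Eaffine_6$ (which cycles the three arms and happens to permute the quasi-simples of each rank-three tube), not the Auslander--Reiten translation itself; $\tau$ does not act on the initial variables $u_i$ by a permutation, so your justification of the shortcut should be rephrased, though the shortcut itself is valid.
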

		\begin{proof}
			A regular cluster variable in $\mathcal A(Q)$ is by definition of the form $X_M$ for some indecomposable rigid regular $\kQ$-module $M$. Such an indecomposable regular rigid is of the form $R^{(l)}$ where $R$ is quasi-simple in an exceptional tube $\mathcal T_\lambda$ and $0<l<p_\lambda$. If $M$ is quasi-simple, the result holds according to the above expressions. Thus, the only case to consider is when $\lambda \in \ens{1, \infty}$ and $l=2$. In this case, the proof is inspired by \cite{Dupont:positiveregular}. According to the previous expressions, for any quasi-simple module $R$ in $\mathcal T_0$ or $\mathcal T_\infty$, the character $X_R$ can be written as 
			$$X_R = \sum_{i=1}^{n_R} L_{R,i}$$
			where $L_{R,i}$ is a (monic) Laurent monomial in the cluster $\textbf u$. Now, we observe that for any $R$, there exist integers $1 \leq i_0^R \leq n_R$ and $1 \leq j^R_0 \leq n_{R[-1]}$ such that 
			$$X_{R[-1]}=\frac{1}{L_{R,i_0}}+\sum_{i \neq j_0^R; \, i=1}^{n_{R[-1]}} L_{R[-1],i}.$$
			These monomials are explicitly listed in the following table.
			\begin{table}[H]
				$$\begin{array}{|c|c|c|c|c|c|}
					\hline
					R & L_{R,i^R_0} \\
					\hline
					N_1 &  \frac{u_4u_7}{u_1} \\
					\hline
					N_1[1] &  \frac{u_2u_5}{u_1} \\
					\hline
					N_1[2] &  \frac{u_3u_6}{u_1} \\
					\hline
					N_\infty &  \frac{u_3u_4}{u_1} \\
					\hline
					N_\infty[1] &  \frac{u_2u_7}{u_1} \\
					\hline
					N_\infty[2] &   \frac{u_5u_6}{u_1} \\
					\hline
				\end{array}$$
			\end{table}

			Thus, we get
			\begin{align*}
				X_{R^{(2)}} 
					& = P_2(X_R,X_{R[-1]}) \\
					& = X_RX_{R[-1]} - 1 \\
					& = \left(L_{R,i_0^R}+ \sum_{\substack{i=1 \\ i \neq i^R_0}}^{n_R} L_{R,i}\right)\left( \frac{1}{L_{R,i^R_0}}+\sum_{\substack{i=1\\i \neq j^R_0}}^{n_{R[-1]}} L_{R[-1],i} \right) - 1 \in \Z_{\geq 0}[\textbf u^{\pm 1}].
			\end{align*}

		\end{proof}

		We can similarly compute the character $X_{M_\lambda}$ associated to a quasi-simple module at the mouth of an homogeneous tube but the result is far too long to be presented here. It is a sum of 322 Laurent monomials so that it is in particular an element of $\Z_{\geq 0}[\textbf u^{\pm 1}]$. Written as an irreducible Laurent polynomial, its denominator is $\prod_i u_i^{\delta_i}$, accordingly to \cite[Theorem 3]{CK2}.

\section{Euler characteristics of complete grassmannians in type $\Eaffine$}\label{section:GrMtypeE}
	In type $\Eaffine_n$ with $n=6,7,8$, our algorithm allows to compute cluster characters associated to any object in the cluster category. The explicit Laurent expansions of these characters are so long that it would neither be reasonable nor useful to present them here. Nevertheless, our algorithms may be used to compute Euler characteristics of complete quiver grassmannians. In order to so so, it is enough to replace the lines ``$X_{P_j[1]}:=u_j$'' by  $X_{P_j[1]}:=1$ in Algorithm \ref{algo:postprojective} and its dual.

	In this section, we give the complete list of these Euler characteristics for quasi-simple modules in tubes of $\Gamma(\CC_Q)$ when $Q$ is an euclidean quiver of type $\Eaffine_n$ with $n=6,7,8$ equipped with one of the orientations considered in Section \ref{section:explicit}.

	In the following tables, we use the notations of Section \ref{section:explicit}. The omitted values correspond to $\tau$-periodic modules for which the value of the Euler characteristic of the complete quiver grassmannian can already be found in the table.
	\begin{table}[H]
		$$\begin{array}{|r|c|c|c|c|}
			\hline
			\lambda 	& 0  & 1 & \infty & \lambda \in \P^{\mathcal H} \\ \hline
			p_\lambda 	& 2  & 3 & 3 & 1 \\ \hline
			N_\lambda 	& 9  & 7 & 7 & 322 \\ \hline
			\tau N_\lambda 	& 36 & 7 & 7 & \\ \hline
			\tau^2 N_\lambda 	&    & 7 & 7 & \\ \hline
		\end{array}$$
		\caption{Characteristics of complete quiver grassmannians in type $\Eaffine_6$}
	\end{table}

	\begin{table}[H]
		$$\begin{array}{|r|c|c|c|c|}
			\hline
			\lambda 	& 0  & 1  & \infty & \lambda \in \P^{\mathcal H} \\ \hline
			p_\lambda 	& 3  & 4  & 2      & 1 \\ \hline
			N_\lambda 	& 10 & 7  & 61     & 3719 \\ \hline
			\tau N_\lambda 	& 9  & 9  & 61     & \\ \hline
			\tau^2 N_\lambda 	& 42 & 7  &        & \\ \hline
			\tau^3 N_\lambda 	&    & 9  &        & \\ \hline
		\end{array}$$
		\caption{Characteristics of complete quiver grassmannians in type $\Eaffine_7$}
	\end{table}

	\begin{table}[H]
		$$\begin{array}{|r|c|c|c|c|}
			\hline
			\lambda & 0 & 1 & \infty & \lambda \in \P^{\mathcal H} \\ \hline
			p_\lambda & 3 & 5 & 2 & 1 \\ \hline
			N_\lambda & 88 & 47 & 779 & 403520 \\ \hline
			\tau N_\lambda & 74 & 11 & 518 & \\ \hline
			\tau^2 N_\lambda & 62 & 9 & & \\ \hline
			\tau^3 N_\lambda & & 10 & & \\ \hline
			\tau^4 N_\lambda & & 9 & & \\ \hline
		\end{array}$$
		\caption{Characteristics of complete quiver grassmannians in type $\Eaffine_8$}
	\end{table}

	Note that the Euler characteristic of the complete quiver grassmannian of a $\kQ$-module $M$ is also equal to the number of Laurent monomials occurring in the Laurent expansion of $X_M$ in the initial cluster $\textbf u$ so that these numbers give an idea of the complexity of the associated variables.

\section*{Acknowledgements}
	This paper was written while the second author was at the university of Sherbrooke as a CRM-ISM postdoctoral fellow under the supervision of the first author, Thomas Br\"ustle and Virginie Charette.


\newcommand{\etalchar}[1]{$^{#1}$}

\end{document}